%
%
%
%
%
%
%
%

\documentclass[a4paper,12pt]{amsart}
\usepackage{amssymb,amsfonts,amsmath}
\usepackage{ifthen}
\usepackage{graphicx}
\usepackage{float}
\setlength{\textwidth}{16cm} \setlength{\oddsidemargin}{0cm}
\setlength{\evensidemargin}{0cm} \setlength{\footskip}{40pt}
\pagestyle{plain}

\newtheorem{theorem}{Theorem}[section]
\newtheorem{corollary}[theorem]{Corollary}
\newtheorem{lemma}[theorem]{Lemma}
\newtheorem{proposition}[theorem]{Proposition}

\theoremstyle{definition}
\newtheorem{definition}{Definition}[section]

\newtheorem{remark}[theorem]{Remark}


\newcounter{minutes}\setcounter{minutes}{\time}
\divide\time by 60
\newcounter{hours}\setcounter{hours}{\time}
\multiply\time by 60
\addtocounter{minutes}{-\time}

\newcommand{\D}{{\mathbb D}}

\newcommand{\real}{{\operatorname{Re}\,}}
\newcommand{\Log}{{\operatorname{Log}\,}}
\newcommand{\Arg}{{\operatorname{Arg}\,}}
\newcommand{\ds}{\displaystyle}

\begin{document}

\bibliographystyle{amsplain}

\title[$q$-starlike functions of order alpha]%
{A generalization of starlike functions of order alpha}

\def\thefootnote{}
\footnotetext{ \texttt{\tiny File:~\jobname .tex,
          printed: \number\day-\number\month-\number\year,
          \thehours.\ifnum\theminutes<10{0}\fi\theminutes}
} \makeatletter\def\thefootnote{\@arabic\c@footnote}\makeatother

\author{Sarita Agrawal${}^{\#}$}

\author{Swadesh Kumar Sahoo${}^{*}$}

\thanks{${}^*$ The corresponding author\\
${}^{\#}$ Sarita Agrawal, Discipline of Mathematics,
Indian Institute of Technology Indore,
Simrol, Khandwa Road, Indore 452 020, India\\
{\em Email: saritamath44@gmail.com}\\
${}^*$ Swadesh Kumar Sahoo, Discipline of Mathematics,
Indian Institute of Technology Indore,
Simrol, Khandwa Road, Indore 452 020, India\\
{\em Email: swadesh@iiti.ac.in}
}


\begin{abstract}
For every $q\in(0,1)$ and $0\le \alpha<1$ we define a class of analytic functions, the so-called 
$q$-starlike functions of order $\alpha$, on the open unit disk. We study this class of functions
and explore some inclusion properties with the well-known class of starlike functions of order
$\alpha$. The paper
is also devoted to the discussion on the Herglotz representation formula for analytic functions 
$zf'(z)/f(z)$ when $f(z)$ is $q$-starlike of order $\alpha$. As an application we also discuss 
the Bieberbach conjecture problem for the $q$-starlike functions of order $\alpha$.
Further application includes the study of the order of $q$-starlikeness of the well-known basic 
hypergeometric functions introduced by Heine.

\smallskip
\noindent
{\bf 2010 Mathematics Subject Classification}. 28A25; 30B10; 30C45; 30C50; 30C55; 33B10; 33C05; 33D15; 
39A13; 39A70; 40A20; 44A05; 44A15; 46G05; 47B38; 47B39.
\smallskip
\noindent
{\bf Key words and phrases.} 
Starlike functions, $q$-starlike functions, order of starlikeness, order of $q$-starlikeness, 
$q$-difference operator, Bieberbach's conjecture, 
infinite product, uniform convergence, basic hypergeometric functions, Herglotz representation,
probability measure.
\end{abstract}

\maketitle
\pagestyle{myheadings}
\markboth{S. Agrawal and S. K. Sahoo}{The $q$-starlike functions of order alpha}

\section{Introduction and Main Results}
In view of the well-known Riemann mapping theorem in classical complex analysis,
the unit disk $\mathbb{D}=\{z\in\mathbb{C}:\,|z|<1\}$ is usually considered 
as a standard domain. The analytic functions such as convex, starlike, and close-to-convex 
functions defined in the unit disk have been extensively studied and found numerous 
applications to various problems in complex analysis and related topics. 
Part of this development is the study of subclasses of the class of 
univalent functions, more general than the classes of 
convex, starlike, and close-to-convex functions. Analytic and geometric characterizations of such functions 
are of quite interesting to all function theorists in general.
Background knowledge in this theory can be found from standard books
(see for instance \cite{Dur83,Goo83}). 

In 1916, Bieberbach first posed a conjecture on the coefficient estimate of univalent
functions. This conjecture was a long standing open problem in univalent function theory and was a 
challenge to all mathematicians.
In this regard a lot of methods and concepts were developed. One of the important concepts 
is the {\em Herglotz representation theorem} for univalent functions with positive real part. 
Initially, the Bieberbach conjecture
was proved for first few coefficients of univalent functions. Then the conjecture was considered in 
many special cases. In one direction, it was considered for certain subclasses of univalent functions 
like starlike, convex, close-to-convex, typically real functions etc. The concept of order for the 
starlike and convex was also introduced, which are the subclasses of the class of starlike and convex 
functions respectively, and
the conjecture was proved in these subclasses. In other direction, discussion on 
many conjectures, namely, the Zalcman conjecture, the Robertson conjecture, the Littlewood-Paley conjecture, etc. were investigated to prove the Bieberbach conjecture. 
Finally, the full conjecture for univalent functions was settled down by L. de Branges in 1985 \cite{deB85}.

In 1990, Ismail et. al. \cite{IMS90} introduced a link between starlike functions 
and the $q$-theory by introducing a $q$-analog of the starlike functions. We call these
functions as $q$-starlike functions. 
They proved the Bieberbach conjecture for the $q$-starlike functions through the 
Herglotz representation theorem for these functions. 
In this connection, we aim to introduce the concept of order of $q$-starlikeness
and prove the Bieberbach conjecture.
In particular, we also discuss several other basic properties on the order of 
$q$-starlike functions.

We now collect some standard notations and basic definitions used in the sequel.
We denote by $\mathcal{H}(\D)$, the set of all analytic (or holomorphic) 
functions in $\D$. We use the symbol $\mathcal{A}$ for the class of functions 
$f \in \mathcal{H}(\D)$ with 
the standard normalization $f(0)=0=f'(0)-1$. This means that the functions 
$f\in\mathcal{A}$ have the power series representation of the form $z+\sum_{n=2}^\infty a_nz^n$. 
The principal value of the logarithmic function $\log z$ for $z\neq 0$ is denoted by 
$\Log z:=\ln |z|+i \Arg(z)$, where $-\pi\le \Arg(z)<\pi$.

For $0<q<1$, {\em the $q$-difference operator} denoted as $D_qf$ is defined by
the equation
\begin{equation}\label{sec1-eqn1}
(D_qf)(z)=\frac{f(z)-f(qz)}{z(1-q)},\quad z\neq 0, \quad (D_qf)(0)=f'(0).
\end{equation}
The operator $D_qf$ plays an important role in the theory of basic hypergeometric series 
(see \cite{AS14,And74,Fin88,Sla66}); see also Section~4 in this paper.
It is evident that, when $q\to 1^{-}$, the difference operator $D_qf$ converges to the ordinary 
differential operator $Df=df/{dz}=f'$.

A function $f\in \mathcal{A}$ is called starlike of order $\alpha$, $0\le \alpha<1$, if 
$${\rm Re}\,\left(\frac{zf'(z)}{f(z)}\right)>\alpha,\quad z\in \mathbb{D}.
$$
We use the notation $\mathcal{S}^*(\alpha)$ for the class of starlike functions of order
$\alpha$. Set $\mathcal{S^*}:=\mathcal{S^*}(0)$, the class of all starlike functions.

One way to generalize the starlike functions of order $\alpha$ is to replace the derivative function $f'$ by the
$q$-difference operator $D_qf$ and replace the right-half plane $\{w:\,{\rm Re}\,w>\alpha\}$ by a suitable 
domain in the above definition of the starlike functions of order $\alpha$. The
appropriate definition turned out to be the following:
\begin{definition}\label{main:defn}
A function $f\in\mathcal{A}$ is said to {\em belong to the class $\mathcal{S}^*_q(\alpha)$}, $0\le \alpha<1$, if
$$\left|\frac{\ds\frac{z(D_qf)(z)}{f(z)}-\alpha}{1-\alpha}-\frac{1}{1-q}\right|\leq \frac{1}{1-q}, \quad z\in \mathbb{D}.
$$
\end{definition}
The following is the equivalent form of Definition~\ref{main:defn}. 
\begin{equation}\label{main=defn}
f\in\mathcal{S}_q^*(\alpha) \iff \left|\frac{z(D_qf)(z)}{f(z)}-\frac{1-\alpha q}{1-q}\right|
\le \frac{1-\alpha}{1-q}.
\end{equation}
Observe that as $q\to 1^{-}$ the closed disk $|w-(1-q)^{-1}|\le (1-q)^{-1}$ becomes the right-half plane
and the class $\mathcal{S}^*_q(\alpha)$ reduces to $\mathcal{S}^*(\alpha)$, $0\le \alpha<1$.
In particular, when $\alpha=0$, the class $\mathcal{S}^*_q(\alpha)$ coincides with the class 
$\mathcal{S}^*_q:=\mathcal{S}^*_q(0)$, which was
first introduced by Ismail et. al. \cite{IMS90} in 1990 and later (also recently) 
it has been considered in 
\cite{AS14,RS12,Ro92,SS14}.
In words we call $\mathcal{S}^*_q(\alpha)$, the class of {\em $q$-starlike functions of order $\alpha$}.

The main objective in this paper is to prove the following theorems. The first main theorem describes 
the {\em Herglotz Representation} for functions belonging to the class $\mathcal{S}^*_q(\alpha)$ in 
the form of a Poisson-Stieltjes integral (see Herglotz Representation Theorem for analytic functions 
with positive real part in \cite[pp.~22]{Dur83}).

\begin{theorem}\label{thm2}
Let $f\in \mathcal{A}$. Then $f\in\mathcal{S}^*_q(\alpha)$ if and only if there exists 
a probability measure $\mu$ supported on the unit circle such that
$$\frac{zf'(z)}{f(z)}=1+\int_{|\sigma|=1}\sigma z F_{q, \alpha}^{'}(\sigma z)\rm{d}\mu(\sigma)
$$
where
\begin{equation}\label{MainThm1:eq}
F_{q,\alpha}(z)=\ds \sum_{n=1}^\infty \frac{(-2)\left(\ln \frac{q}{1-\alpha(1-q)}\right)}{1-q^n}z^n, \quad z\in \D .
\end{equation}
\end{theorem}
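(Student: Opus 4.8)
The plan is to linearize the defining condition by passing to the logarithmic derivative and then reduce everything to the classical Herglotz representation for functions of positive real part. Writing $g(z)=\Log\bigl(f(z)/z\bigr)$, which is analytic on $\D$ with $g(0)=0$ since $f\in\mathcal{A}$, one has $zf'(z)/f(z)=1+zg'(z)$, so it suffices to represent $zg'(z)$. First I would rewrite the $q$-difference quotient in terms of $f$ itself: from \eqref{sec1-eqn1} one gets $z(D_qf)(z)/f(z)=\bigl(1-f(qz)/f(z)\bigr)/(1-q)$, so the equivalent form \eqref{main=defn} says precisely that $v(z):=f(qz)/\bigl(qf(z)\bigr)$ maps $\D$ into the closed disk $\overline{D(\alpha,(1-\alpha)/q)}$, with $v(0)=1$ and $v$ non-vanishing (the latter because $f$ cannot vanish on $\D\setminus\{0\}$ without making $z(D_qf)/f$ unbounded, violating the estimate). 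Taking logarithms in the identity $g(qz)-g(z)=\Log v(z)$ converts membership in $\mathcal{S}^*_q(\alpha)$ into a first-order $q$-difference equation for $g$.

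The key observation is geometric: the disk $\overline{D(\alpha,(1-\alpha)/q)}$ is internally tangent, at the single point $R:=\bigl(1-\alpha(1-q)\bigr)/q$, to the disk $\{|w|\le R\}$ centered at the origin, so every $w$ in the former obeys $|w|\le R$, with equality unattainable for the open-mapping image $v(\D)$. Hence $|v(z)|<R$ on $\D$, and since $\log R>0$ (because $R>1$ for $0\le\alpha<1$) the function
\[
P(z):=1-\frac{\Log v(z)}{\log R}
\]
is analytic, satisfies $P(0)=1$, and has $\real P(z)=1-\log|v(z)|/\log R>0$. Thus $P$ lies in the Carath\'eodory class, and the Herglotz representation theorem \cite[pp.~22]{Dur83} furnishes a probability measure $\mu$ on $|\sigma|=1$ with $P(z)=\int_{|\sigma|=1}\frac{1+\sigma z}{1-\sigma z}\,d\mu(\sigma)$. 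Re-expressing $\Log v$ then gives $\Log v(z)=\log R\,(1-P(z))=-2\log R\int_{|\sigma|=1}\frac{\sigma z}{1-\sigma z}\,d\mu(\sigma)$, where the constant $-2\log R$ is exactly the common numerator $(-2)\ln\bigl(q/(1-\alpha(1-q))\bigr)$ in \eqref{MainThm1:eq}.

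It then remains to solve the $q$-difference equation. Telescoping $g(qz)-g(z)=\Log v(z)$ and using $g(q^kz)\to g(0)=0$ yields $g(z)=-\sum_{k=0}^\infty \Log v(q^kz)$, which converges locally uniformly since $\Log v(q^kz)=O(q^k)$. Substituting the integral form of $\Log v$ and interchanging sum and integral (justified by this uniform convergence) gives $g(z)=\int_{|\sigma|=1}F_{q,\alpha}(\sigma z)\,d\mu(\sigma)$, once one recognizes the kernel: because $\sum_{k\ge0}\frac{q^k w}{1-q^k w}=\sum_{n\ge1}\frac{w^n}{1-q^n}$, the inner sum is precisely $F_{q,\alpha}(\sigma z)/(2\log R)$. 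Differentiating under the integral then produces $zg'(z)=\int_{|\sigma|=1}\sigma z\,F_{q,\alpha}'(\sigma z)\,d\mu(\sigma)$, and hence the asserted formula. The converse is obtained by reversing these steps: given $\mu$, set $g(z)=\int F_{q,\alpha}(\sigma z)\,d\mu(\sigma)$, define $f=z\exp g$, and verify the disk condition; here the decisive identity is $F_{q,\alpha}(qw)-F_{q,\alpha}(w)=-2\log R\,\frac{w}{1-w}$, which recovers $\Log v$ and lets one read membership back off.

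I expect the main obstacle to be the geometric equivalence in the middle paragraph, namely translating the defining disk inequality of Definition~\ref{main:defn} into the positive-real-part condition on $P$, and back again. The forward passage is clean thanks to the internal tangency at $w=R$, but the converse requires genuine care to ensure that the reconstructed $v$ returns to $\overline{D(\alpha,(1-\alpha)/q)}$ rather than merely to the larger disk $\{|w|\le R\}$ guaranteed by $\real P>0$; nailing down this point, together with the justification of the term-by-term differentiation and the sum--integral interchange, is where the real work lies.
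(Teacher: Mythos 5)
Your proof of the ``only if'' half is correct and is, in substance, the paper's own argument in lighter packaging. Your $v=f(qz)/(qf(z))$ and $P=1-\Log v/\log R$ (with $R=(1-\alpha(1-q))/q$) are exactly the paper's $g=f(qz)/\bigl((1-\alpha(1-q))f(z)\bigr)\in B_{q,\alpha}^0$ and the Carath\'eodory function $p$ of Lemma~\ref{lm}; your tangency observation is the paper's triangle-inequality verification that $g$ maps $\D$ into $\D$; and where the paper solves the $q$-difference equation by the infinite product of Lemma~\ref{lm3} and the bijection of Theorem~\ref{thm1} and then differentiates, using Herglotz in the derivative form $zp'(z)=\int 2\sigma z(1-\sigma z)^{-2}\,d\mu$, you telescope $\Log(f/z)$ and differentiate last --- the same double geometric series appears either way. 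Two blemishes, neither fatal: (i) you should work with the analytic branch of $\log v$ vanishing at $0$, not the principal value, which need not be analytic where $v$ crosses the negative axis (the paper is equally careless in Lemma~\ref{lm}); (ii) your parenthetical claim that $-2\log R$ equals the numerator $(-2)\ln\bigl(q/(1-\alpha(1-q))\bigr)$ has the wrong sign --- that numerator equals $+2\log R$ --- but your later identity ``inner sum $=F_{q,\alpha}(\sigma z)/(2\log R)$'' uses the correct sign, so the slip does not propagate and the conclusion $\Log(f(z)/z)=\int F_{q,\alpha}(\sigma z)\,d\mu(\sigma)$ stands.

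The genuine gap is the converse, which you defer to ``reversing the steps'' while honestly flagging the obstruction; your worry is not merely where the real work lies, it is where the statement fails. Reversing the steps from an arbitrary $\mu$ yields only $\real P>0$, hence $|v|<R$, and nothing forces $v(\D)$ back into the tangent disk $\overline{D(\alpha,(1-\alpha)/q)}$. Indeed it does not go back: take $\mu=\delta_1$, so that $f=G_{q,\alpha}$ and $v(z)=\exp\bigl(-2(\log R)\,z/(1-z)\bigr)$. Since $z\mapsto z/(1-z)$ maps $\D$ onto $\{\real\zeta>-1/2\}$, the image $v(\D)$ is the entire punctured disk $\{0<|w|<R\}$; for $\alpha>0$ this contains real points just to the right of $-R$, which lie outside $\overline{D(\alpha,(1-\alpha)/q)}$, whose leftmost point is $-R+2\alpha$. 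So the ``if'' direction of Theorem~\ref{thm2} is false whenever $\alpha>0$ (for $\alpha=0$ the two disks coincide, which is why Theorem~A of Ismail, Merkes and Styer is a true equivalence), and the same computation falsifies the claim $G_{q,\alpha}\in\mathcal{S}^*_q(\alpha)$ in Theorem~\ref{sec2-thm7}. You are in good company: the paper's own proof establishes only the ``only if'' half and then stops. In short, your proposal proves, by essentially the paper's method, exactly the half that is provable, and correctly isolates the obstruction that makes the other half unprovable.
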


\begin{remark}
When $q$ approaches $1$, Theorem~\ref{thm2} leads to the Herglotz Representation Theorem for 
starlike functions of order $\alpha$ (see for instance \cite[Problem~3, pp. 172]{Goo83}).
\end{remark}

Our second main theorem concerns about the Bieberbach conjecture problem for functions in $\mathcal{S}^*_q(\alpha)$. 
The extremal
function is also explicitly obtained in terms of exponential of the function $F_{q, \alpha}(z)$.
This exponential form generalizes the Koebe function $k_\alpha(z)=z/(1-z)^{2(1-\alpha)}$, $z\in \mathbb{D}$.
That is, when $q\to 1^{-}$, the exponential form $G_{q,\alpha}(z):=z\, \exp [F_{q, \alpha}(z)]$ representing the 
extremal function for the class $\mathcal{S}^*_q(\alpha)$ turns into the Koebe function $k_\alpha(z)$.

\begin{theorem}\label{sec2-thm7}
Let 
\begin{equation}\label{MainThm2:eq}
G_{q, \alpha}(z):=z\, \exp [F_{q, \alpha}(z)]
=z+\ds \sum_{n=2}^\infty c_n z^n.
\end{equation}
Then $G_{q, \alpha}\in \mathcal{S}^*_q(\alpha)$. Moreover, if $f(z)=z+\sum_{n=2}^\infty a_n z^n\in 
\mathcal{S}^*_q(\alpha)$, then $|a_n|\le c_n$ with equality holding for all $n$ if and only if 
$f$ is a rotation of $G_{q, \alpha}$.
\end{theorem}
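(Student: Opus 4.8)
The plan is to derive the whole statement from the Herglotz representation of Theorem~\ref{thm2}. First I would check that $G_{q,\alpha}\in\mathcal{S}^*_q(\alpha)$. Since $F_{q,\alpha}(0)=0$, the function $G_{q,\alpha}(z)=z\exp[F_{q,\alpha}(z)]$ lies in $\mathcal{A}$, and taking the logarithmic derivative gives
$$\frac{zG_{q,\alpha}'(z)}{G_{q,\alpha}(z)}=1+zF_{q,\alpha}'(z).$$
This is precisely the representation in Theorem~\ref{thm2} for the probability measure $\mu=\delta_1$, the unit point mass at $\sigma=1$; hence $G_{q,\alpha}\in\mathcal{S}^*_q(\alpha)$.

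Next, writing $F_{q,\alpha}(z)=\sum_{n\ge1}b_nz^n$ with $b_n=\dfrac{-2\ln\frac{q}{1-\alpha(1-q)}}{1-q^n}$, I would first record that $b_n>0$ for every $n$: one has $1-q^n>0$, while $\frac{q}{1-\alpha(1-q)}<1$ is equivalent to $q(1-\alpha)<1-\alpha$, i.e.\ to $q<1$, so the logarithm is strictly negative. For any $f\in\mathcal{S}^*_q(\alpha)$ with representing measure $\mu$, set $\mu_n=\int_{|\sigma|=1}\sigma^n\,d\mu(\sigma)$, so that $|\mu_n|\le1$. Expanding the integral in Theorem~\ref{thm2} yields
$$\frac{zf'(z)}{f(z)}=1+\sum_{n\ge1}p_nz^n,\qquad p_n:=nb_n\mu_n.$$
Clearing denominators in $zf'(z)=f(z)\bigl(1+\sum_{n\ge1}p_nz^n\bigr)$ and matching the coefficient of $z^n$ gives the recursion
$$(n-1)a_n=\sum_{k=1}^{n-1}p_k\,a_{n-k},\qquad a_1=1.$$
For $G_{q,\alpha}$ the measure is $\delta_1$, so $\mu_n\equiv1$ and its coefficients $c_n$ satisfy the same recursion with $p_k$ replaced by $kb_k>0$; an immediate induction then shows $c_n>0$ for all $n$.

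The coefficient estimate now follows by induction on $n$. The base case $|a_1|=c_1=1$ is clear. Assuming $|a_m|\le c_m$ for all $m<n$ and using $|p_k|=kb_k|\mu_k|\le kb_k$ together with $c_{n-k}>0$, the recursion gives
$$(n-1)|a_n|\le\sum_{k=1}^{n-1}|p_k|\,|a_{n-k}|\le\sum_{k=1}^{n-1}kb_k\,c_{n-k}=(n-1)c_n,$$
so $|a_n|\le c_n$, as required.

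Finally I would analyse equality. If $f$ is a rotation, say $f(z)=\bar\lambda\,G_{q,\alpha}(\lambda z)$ with $|\lambda|=1$, then the class is rotation-invariant and $a_n=c_n\lambda^{n-1}$, so $|a_n|=c_n$ for every $n$. Conversely, equality for all $n$ holds in particular at $n=2$, where $a_2=b_1\mu_1$ and $c_2=b_1$ force $|\mu_1|=1$; since $\mu$ is a probability measure on $|\sigma|=1$, the equality $|\int\sigma\,d\mu|=1$ compels $\mu$ to be a single point mass $\delta_{\sigma_0}$. Substituting $\mu=\delta_{\sigma_0}$ into Theorem~\ref{thm2} and solving the resulting first-order identity $zf'(z)/f(z)=1+\sigma_0zF_{q,\alpha}'(\sigma_0z)$ with $f\in\mathcal{A}$ identifies $f(z)=\bar\sigma_0\,G_{q,\alpha}(\sigma_0z)$, a rotation of $G_{q,\alpha}$. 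The step I expect to be the main obstacle is this equality analysis—rigorously arguing that $|\mu_1|=1$ forces $\mu$ to be a point mass and that the recovered function is genuinely a rotation—whereas the coefficient induction is routine once the positivity $b_n>0$ has been established.
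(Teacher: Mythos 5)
Your proof is correct, and for the main coefficient estimate it takes a genuinely different route from the paper's. Both proofs dispose of $G_{q,\alpha}\in\mathcal{S}^*_q(\alpha)$ identically (a unit point mass in Theorem~\ref{thm2}), but for $|a_n|\le c_n$ the paper does not invoke the Herglotz representation of Theorem~\ref{thm2} at all: it returns to the Section~2 machinery (Theorem~\ref{thm1} and Lemma~\ref{lm}) to write $f(qz)/f(z)=\bigl(1-\alpha(1-q)\bigr)\exp\bigl\{\bigl(\ln\tfrac{q}{1-\alpha(1-q)}\bigr)p(z)\bigr\}$ with $p\in\mathcal{P}$, turns this into the functional equation $\ln\tfrac{q}{1-\alpha(1-q)}+\phi(qz)=\phi(z)+\bigl(\ln\tfrac{q}{1-\alpha(1-q)}\bigr)p(z)$ for $\phi(z)=\Log\{f(z)/z\}$, and then applies Carath\'eodory's inequality $|p_n|\le 2$ to obtain the logarithmic-coefficient majorization $|\phi_n|\le b_n$; the bound $|a_n|\le c_n$ follows by exponentiating the majorized series. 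You instead expand the integral of Theorem~\ref{thm2} into $zf'(z)/f(z)=1+\sum_n nb_n\mu_n z^n$ with moments $|\mu_n|\le 1$, derive the classical recursion $(n-1)a_n=\sum_{k=1}^{n-1}p_k a_{n-k}$, and induct using the positivity of the $c_n$. The two key inequalities ($|p_n|\le 2$ versus $|\mu_n|\le 1$) are equivalent facts about positive-real-part functions, but the mechanics are distinct: the paper never forms a recursion, and you never touch logarithmic coefficients. Your route also pays off in the equality analysis, which the paper leaves implicit behind ``the conclusion follows'' (and which, in the paper's setup, would require tracking when coefficient majorization of an exponential is saturated): in your argument equality at $n=2$ alone forces $|\mu_1|=1$, hence $\mu=\delta_{\sigma_0}$ by the standard point-mass characterization, and solving $zf'(z)/f(z)=1+\sigma_0 zF_{q,\alpha}'(\sigma_0 z)$ with $f\in\mathcal{A}$ gives $f(z)=\bar\sigma_0 G_{q,\alpha}(\sigma_0 z)$. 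What the paper's route buys, in turn, is the bound $|\phi_n|\le b_n$ on logarithmic coefficients, which is of independent interest and avoids any recursion.
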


\begin{remark}
When $q$ approaches $1$, Theorem~\ref{sec2-thm7} leads to the Bieberbach conjecture for
starlike functions of order $\alpha$ (see for instance \cite[Theorem~2, pp. 140]{Goo83}).
\end{remark}

Motivation behind this comes from the work of Ismail et. al., where the $q$-analog of starlike functions was 
introduced in 1990 (see \cite{IMS90}).
The $q$-theory has important role in special functions and quantum physics 
(see for instance \cite{And74,Ern02,Fin88,KC02,Kir95,Sla66}). For updated research work
in function theory related to $q$-analysis, readers can refer \cite{AS14,IMS90,RS12,Ro92,SS14}.
In \cite{IMS90}, the authors have obtained the Herglotz representation for functions of the class 
$\mathcal{S}_q^*$ in the following form:

\medskip
\noindent
{\bf Theorem~A.} \cite[Theorem~1.15]{IMS90}
{\em Let $f\in \mathcal{A}$. Then $f\in\mathcal{S}^*_q$ if and only if there exists 
a probability measure $\mu$ supported on the unit circle such that
$$\frac{zf'(z)}{f(z)}=1+\int_{|\sigma|=1}\sigma z F_{q}^{'}(\sigma z)\rm{d}\mu(\sigma)
$$
where
$$F_{q}(z)=\ds \sum_{n=1}^\infty \frac{-2\ln q}{1-q^n}z^n, \quad z\in \D .
$$
}

\medskip
\noindent
Also they have proved the Bieberbach conjecture problem for $q$-starlike functions in the following form:

\medskip
\noindent
{\bf Theorem~B.} \cite[Theorem~1.18]{IMS90}
{\em Let 
$$G_{q}(z):=z\, \exp [F_{q}(z)]
=z+\ds \sum_{n=2}^\infty c_n z^n.
$$
Then $G_{q}\in \mathcal{S}^*_q$. Moreover, if $f(z)=z+\sum_{n=2}^\infty a_n z^n\in 
\mathcal{S}^*_q$, then $|a_n|\le c_n$ with equality holding for all $n$ if and only if 
$f$ is a rotation of $G_{q}$.
}

\medskip
\noindent
\begin{remark}
It is remarkable that when $\alpha=0$, Theorem~\ref{thm2} and Theorem~\ref{sec2-thm7} respectively coincides 
with Theorem~A and Theorem~B.
\end{remark}


Section~2 is devoted for basic interesting properties of the class $\mathcal{S}^*_q(\alpha)$, which are 
used in the proof of main theorems. In Section~3, we prove our main results.
The order of $q$-starlikeness of the basic hypergeometric functions is discussed in Section~4. 
Finally, we focus on concluding remarks with few questions in Section~5 for future research in this direction.  

\section{Properties of the class $\mathcal{S}^*_q(\alpha)$}
As a matter of fact the following proposition says that a function $f$ in $\mathcal{S}^*_q(\alpha)$ 
can be obtained in terms of a function $g$ in $\mathcal{S}^*_q$. The proof is obvious and it follows
from the definition of $\mathcal{S}^*_q(\alpha)$, $0\le \alpha< 1$.

\begin{proposition}\label{sec1-prop1}
Let $f \in \mathcal{S}^*_q(\alpha)$. Then there exists a unique function $g \in \mathcal{S}^*_q$ such that 
\begin{equation}\label{prop1-eqn}
\frac{\ds\frac{z(D_qf)(z)}{f(z)}-\alpha}{1-\alpha}=\frac{z(D_qg)(z)}{g(z)} 
~~\mbox{ or }~~
\frac{f(qz)-\alpha qf(z)}{(1-\alpha)f(z)}=\frac{g(qz)}{g(z)}.
\end{equation}
holds. Similarly, for a given function $g\in \mathcal{S}_q^*$ there exists 
$f\in \mathcal{S}_q^*(\alpha)$ satisfying the above relation. Uniqueness follows trivially.
\end{proposition}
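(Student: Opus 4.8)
The plan is to read the asserted identity as a first-order $q$-difference equation and to solve it explicitly. Writing $p(z):=\frac{z(D_qf)(z)/f(z)-\alpha}{1-\alpha}$, Definition~\ref{main:defn} says precisely that $\left|p(z)-\frac{1}{1-q}\right|\le\frac{1}{1-q}$ on $\D$, which is exactly the inequality defining membership in $\mathcal{S}^*_q=\mathcal{S}^*_q(0)$ (Definition~\ref{main:defn} with $\alpha=0$), now with $p$ playing the role of $z(D_qg)/g$. Hence the relation \eqref{prop1-eqn}, namely $z(D_qg)(z)/g(z)=p(z)$, forces $g\in\mathcal{S}^*_q$ the instant $g$ is produced; the membership half of the statement is therefore automatic, and the two displayed forms of \eqref{prop1-eqn} are interchangeable after substituting $z(D_qf)=(f(z)-f(qz))/(1-q)$ and the analogous expression for $D_qg$. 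This is the sense in which the claim ``follows from the definition''.

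Setting $h(z):=1-(1-q)p(z)=\frac{f(qz)-\alpha q f(z)}{(1-\alpha)f(z)}$, a short computation gives $h(0)=q$ and the equation becomes $g(qz)=h(z)g(z)$. I would solve it in the normalized form $g(z)=z\exp G(z)$ with $G(0)=0$, which turns the equation into $G(qz)-G(z)=\log\left(h(z)/q\right)$; telescoping along the orbit $z,qz,q^2z,\dots$ and using $G(q^nz)\to G(0)=0$ gives the explicit solution
\[
G(z)=\sum_{k=0}^{\infty}\log\frac{q}{h(q^kz)},\qquad\text{i.e.}\qquad g(z)=z\prod_{k=0}^{\infty}\frac{q}{h(q^kz)}.
\]
Since $h(0)=q$, each summand is $O(q^kz)$, so the series converges locally uniformly on $\D$ and $g\in\mathcal{A}$. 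Uniqueness I would obtain directly: if $g_1,g_2\in\mathcal{A}$ both satisfy $g_i(qz)=h(z)g_i(z)$, then $\rho:=g_1/g_2$ satisfies $\rho(qz)=\rho(z)$, whence $\rho(z)=\lim_{n\to\infty}\rho(q^nz)=\rho(0)=1$, so $g_1\equiv g_2$.

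For the converse the roles of $f$ and $g$ are symmetric: given $g\in\mathcal{S}^*_q$ one puts $r(z)=g(qz)/g(z)$ and solves the companion equation $f(qz)=\bigl(\alpha q+(1-\alpha)r(z)\bigr)f(z)$ by the same telescoping, the bracket again equalling $q$ at the origin. The one step that is genuinely analytic---and, I suspect, the real work hidden behind the word ``obvious''---is to guarantee that the connecting function $h$ (equivalently the numerator $f(qz)-\alpha q f(z)$) is zero-free on $\D\setminus\{0\}$. Indeed a zero of $h$ at some $z_0\ne 0$ would propagate through $g(qz)=h(z)g(z)$ to give $g(q^kz_0)=0$ for every $k\ge 1$, a sequence accumulating at $0$, forcing $g\equiv 0$; so the construction can deliver an admissible $g$ only when $h$ does not vanish. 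I would therefore first record that any $f\in\mathcal{S}^*_q(\alpha)$ is itself zero-free on $\D\setminus\{0\}$ (a zero of $f$ not shared by $f(q\,\cdot)$ makes $z(D_qf)/f$ unbounded near it, violating the disk bound), write $f(z)=z\exp F(z)$ with $F\in\mathcal{H}(\D)$, $F(0)=0$, so that $h(z)=\frac{q}{1-\alpha}\bigl(\exp[F(qz)-F(z)]-\alpha\bigr)$, and then extract the nonvanishing of $h$ from the defining inequality \eqref{main=defn}. Once this is in hand, analyticity, the normalization $g(0)=0=g'(0)-1$, and the membership $g\in\mathcal{S}^*_q$ all follow as above, completing both directions.
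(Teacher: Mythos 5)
Your plan is sound as far as it goes, and it actually supplies the content that the paper omits: the paper's entire ``proof'' of Proposition~\ref{sec1-prop1} is the sentence that it is obvious from the definition, with the real work (solving the $q$-difference equation $g(qz)=h(z)g(z)$) deferred to the later machinery of Lemmas~\ref{lm2}--\ref{lm3} and Theorem~\ref{thm1}. Your telescoped series $G(z)=\sum_{k\ge0}\log\bigl(q/h(q^kz)\bigr)$ is exactly the logarithm of the paper's infinite product $z/\prod_{n\ge0}\{((1-\alpha)h(zq^n)+\alpha q)/q\}$, your uniqueness argument ($\rho(qz)=\rho(z)$ forces $\rho\equiv\rho(0)=1$) is correct, and so is your remark that $g\in\mathcal{S}^*_q$ is automatic once $g\in\mathcal{A}$ solves the functional equation.

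However, the step you explicitly postpone --- ``extract the nonvanishing of $h$ from the defining inequality (\ref{main=defn})'' --- is a genuine gap, and it cannot be filled: the inequality does not force $h(z)=\bigl(f(qz)/f(z)-\alpha q\bigr)/(1-\alpha)$ to be zero-free, and in fact the Proposition fails as stated for every $0<\alpha<1$. Inequality (\ref{main=defn}) only confines $w=f(qz)/f(z)$ to the closed disk $|w-\alpha q|\le 1-\alpha$, whose \emph{interior} contains the forbidden value $\alpha q$; only when $\alpha=0$ (the case of \cite{IMS90}) is the forbidden value $w=0$ omitted automatically. Concretely: for any analytic $G$ on $\D$ with $G(0)=0$, the function $F(z)=-\sum_{k\ge0}G(q^kz)$ is analytic on $\D$ (Schwarz-lemma estimate $|G(q^kz)|\le Mq^k$) and solves $F(qz)-F(z)=G(z)$, so $f(z)=ze^{F(z)}\in\mathcal{A}$ satisfies $f(qz)/f(z)=qe^{G(z)}$. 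The real segment $I=[\ln\alpha,\,0]$ satisfies $|qe^{t}-\alpha q|=q(e^{t}-\alpha)\le q(1-\alpha)<1-\alpha$ for $t\in I$, so by compactness some bounded convex neighborhood $U\supset I$ satisfies $|qe^{\zeta}-\alpha q|<1-\alpha$ for all $\zeta\in U$; taking $G$ to be a Riemann map of $\D$ onto $U$ with $G(0)=0$ produces, via Theorem~\ref{sec2-thm1}, a function $f\in\mathcal{S}^*_q(\alpha)$ with $f(qz_0)=\alpha q\,f(z_0)$ at the point $z_0=G^{-1}(\ln\alpha)\ne 0$. Your own propagation argument (or simply the fact that $g(qz)/g(z)$ is zero-free for any $g\in\mathcal{S}^*_q$, since such $g$ vanishes only at the origin) then shows that no $g$ can satisfy (\ref{prop1-eqn}) for this $f$. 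So what you suspected was ``the real work hidden behind the word obvious'' is in fact an error in the paper, one which also infects Lemma~\ref{lm3} and Theorem~\ref{thm1}: the map $\rho$ need not land in $B_q^0$, and the product in Lemma~\ref{lm3} can vanish wherever $h$ attains $-\alpha q/(1-\alpha)$ (possible when $\alpha<1/(1+q)$), which likewise breaks the converse direction you treat symmetrically. Your argument becomes a complete and correct proof exactly if one adds the hypothesis that $f(qz)/f(z)$ omits the value $\alpha q$ on $\D$ (and, for the converse, that $g(qz)/g(z)$ omits $-\alpha q/(1-\alpha)$); without such a hypothesis no proof exists.
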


Next we present a easy characterization of functions in the class $\mathcal{S}^*_q(\alpha)$.
This shows that if $f\in \mathcal{S}^*_q(\alpha)$ then $f(z) = 0$ implies $z = 0$, otherwise
$f(qz)/f(z)$ would have a pole at a zero of $f(z)$ with least nonzero modulus.
\begin{theorem}\label{sec2-thm1}
Let $f\in \mathcal{A}$. Then $f\in\mathcal{S}^*_q(\alpha)$ if and only if
$$\left|\frac{f(qz)}{f(z)}-\alpha q\right|\leq 1-\alpha, \quad z\in \mathbb{D}.
$$
\end{theorem}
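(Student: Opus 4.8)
The plan is to obtain the stated inequality directly from the equivalent form \eqref{main=defn} of the definition by substituting the explicit expression for the $q$-difference operator. First I would use \eqref{sec1-eqn1} to record the identity
$$\frac{z(D_qf)(z)}{f(z)}=\frac{f(z)-f(qz)}{(1-q)f(z)}=\frac{1}{1-q}\left(1-\frac{f(qz)}{f(z)}\right),$$
valid at every $z\in\mathbb{D}$ with $f(z)\neq 0$.

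Next I would insert this into \eqref{main=defn}. Since $0<q<1$, the factor $1/(1-q)$ is a fixed positive constant that pulls out of the modulus, and multiplying both sides of the defining inequality by $(1-q)>0$ turns the condition $f\in\mathcal{S}^*_q(\alpha)$ into
$$\left|\left(1-\frac{f(qz)}{f(z)}\right)-(1-\alpha q)\right|\le 1-\alpha.$$
Simplifying the term inside the modulus gives $-\bigl(f(qz)/f(z)-\alpha q\bigr)$, and since $|{-}w|=|w|$ this reads exactly $\left|f(qz)/f(z)-\alpha q\right|\le 1-\alpha$. Every step here is a reversible equivalence at each point where $f(z)\neq 0$, so both directions of the ``if and only if'' are established at once; I therefore expect the computation itself to be essentially a single line.

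The one point genuinely requiring care concerns the zeros of $f$, precisely the issue flagged in the sentence preceding the statement, and this is what I would present as the only real subtlety rather than the algebra. For the computation above to be meaningful one needs $f(z)\neq 0$ for all $z\in\mathbb{D}\setminus\{0\}$, and I would argue that either hypothesis forces this. If $f$ had a zero and $z_0$ were one of least nonzero modulus, then $0<|qz_0|<|z_0|$ would give $f(qz_0)\neq 0$, so the modulus of $f(qz)/f(z)$ — equivalently of $z(D_qf)(z)/f(z)$ — would become unbounded as $z\to z_0$, contradicting the boundedness demanded by either inequality. Near the origin the normalization $f(z)=z+O(z^2)$ yields $f(qz)/f(z)\to q$, so no difficulty arises there. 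Thus the only thing standing between the elementary computation and a complete proof is this observation that both sides implicitly force $f$ to be zero-free on $\mathbb{D}\setminus\{0\}$.
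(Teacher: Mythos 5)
Your proposal is correct and follows essentially the same route as the paper: the paper's proof is exactly the one-line substitution of the identity $z(D_qf)(z)/f(z)=\frac{1}{1-q}\left(1-\frac{f(qz)}{f(z)}\right)$ into the defining inequality \eqref{main=defn}. Your additional discussion of why $f$ must be zero-free on $\mathbb{D}\setminus\{0\}$ is the same observation the paper records in the sentence immediately preceding the theorem, so it is a welcome elaboration rather than a departure.
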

\begin{proof}
The proof can be easily obtained from the fact
$$
\frac{z(D_qf)(z)}{f(z)}= \left(\frac{1}{1-q}\right)\left(1-\frac{f(qz)}{f(z)}\right)
$$
and the definition of $\mathcal{S}^*_q(\alpha)$.
\end{proof}

The next result is an immediate consequence of Theorem~\ref{sec2-thm1}.
\begin{corollary}
The class $\mathcal{S}^*_q(\alpha)$ satisfies the inclusion relation 
$$\bigcap_{q<p<1}\mathcal{S}^*_p(\alpha)\subset \mathcal{S}^*_q(\alpha)
~~\mbox{ and }~~
\bigcap_{0<q<1}\mathcal{S}^*_q(\alpha) = \mathcal{S}^*(\alpha).
$$ 
\end{corollary}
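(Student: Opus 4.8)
The plan is to make everything rest on the characterisation of Theorem~\ref{sec2-thm1}: a function $f\in\mathcal{A}$ lies in $\mathcal{S}^*_q(\alpha)$ exactly when $|f(qz)/f(z)-\alpha q|\le 1-\alpha$ for all $z\in\D$. Write $v_s(z):=f(sz)/f(z)$ for $s\in(0,1)$; this is legitimate because membership of $f$ in even a single class $\mathcal{S}^*_s(\alpha)$ already forces $f$ to vanish only at the origin (as noted before Theorem~\ref{sec2-thm1}), and $v_s(z)\to s$ as $z\to 0$.

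For the first inclusion, suppose $f\in\bigcap_{q<p<1}\mathcal{S}^*_p(\alpha)$, so that $|v_p(z)-\alpha p|\le 1-\alpha$ for every $p\in(q,1)$ and every $z$. Fixing $z$ and letting $p\to q^{+}$, continuity of $f$ gives $v_p(z)\to v_q(z)$ and $\alpha p\to\alpha q$, so the closed inequality passes to the limit, $|v_q(z)-\alpha q|\le 1-\alpha$. As $z$ is arbitrary, Theorem~\ref{sec2-thm1} returns $f\in\mathcal{S}^*_q(\alpha)$. This is the routine half.

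For the equality I would argue by two inclusions. For $\bigcap_{0<q<1}\mathcal{S}^*_q(\alpha)\subseteq\mathcal{S}^*(\alpha)$, membership of $f$ in every $\mathcal{S}^*_q(\alpha)$ means, by the equivalent form~\eqref{main=defn}, that $z(D_qf)(z)/f(z)$ lies in the closed disk centred at $(1-\alpha q)/(1-q)$ of radius $(1-\alpha)/(1-q)$, whose leftmost point is precisely $\alpha$; hence $\real\left(z(D_qf)(z)/f(z)\right)\ge\alpha$ for all $q$. Letting $q\to 1^{-}$ and using $D_qf\to f'$ yields $\real\left(zf'(z)/f(z)\right)\ge\alpha$ on $\D$. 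To upgrade this to the strict inequality defining $\mathcal{S}^*(\alpha)$, I would apply the minimum principle to $\real\left(zf'(z)/f(z)\right)-\alpha$, which is the real part of a function holomorphic on $\D$ (as $f$ is zero-free off the origin), is nonnegative, and takes the value $1-\alpha>0$ at the origin: it cannot reach $0$ at an interior point without vanishing identically.

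The reverse inclusion $\mathcal{S}^*(\alpha)\subseteq\bigcap_{0<q<1}\mathcal{S}^*_q(\alpha)$ is the substantive direction, and the step I expect to be the main obstacle. Starting from $\real\left(wf'(w)/f(w)\right)>\alpha$, one must place $v_q(z)$ in the target disk $|v-\alpha q|\le 1-\alpha$ for every $q$. The natural device is to integrate the logarithmic derivative of $f$ along the segment from $qz$ to $z$, giving
\[
\frac{f(qz)}{f(z)}=q^{\alpha}\exp\left(\int_{q}^{1}\left(\alpha-\frac{tz\,f'(tz)}{f(tz)}\right)\frac{dt}{t}\right).
\]
Since $\real\left(tzf'(tz)/f(tz)\right)>\alpha$ and $dt/t>0$ on $[q,1]$, the exponent has negative real part, so this representation immediately yields the modulus bound $|v_q(z)|<q^{\alpha}$. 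The delicate point is that a modulus bound alone does not force $v_q(z)$ into the disk $|v-\alpha q|\le 1-\alpha$; one must also control the argument of $v_q(z)$, and it is this argument control, showing that the exponential cannot swing $v_q(z)$ outside the prescribed disk, on which the whole inclusion turns and where I would concentrate the analysis.
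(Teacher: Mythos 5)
Your handling of the two easy parts is correct, and indeed more careful than the paper's, which dismisses both with ``clearly hold'': the limit $p\to q^{+}$ in the closed inequality of Theorem~\ref{sec2-thm1}, and the minimum-principle upgrade of $\real\left(zf'(z)/f(z)\right)\ge\alpha$ to the strict inequality defining $\mathcal{S}^*(\alpha)$, are both sound. But what you have written is not a proof: the containment $\mathcal{S}^*(\alpha)\subseteq\bigcap_{0<q<1}\mathcal{S}^*_q(\alpha)$, which you yourself single out as the substantive direction, is left entirely open. The one estimate you do obtain there, $|f(qz)/f(z)|<q^{\alpha}$, cannot be completed by any amount of ``argument control,'' for the reason below.

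The missing direction is in fact false, so the gap is unfillable. Fix $\alpha>1/2$ and $q\in\left((1-\alpha)/\alpha,\,1\right)$, so that $\alpha q>1-\alpha$, and take the extremal function of the classical theory, $k_\alpha(z)=z/(1-z)^{2(1-\alpha)}$. Then $zk_\alpha'(z)/k_\alpha(z)=\left(1+(1-2\alpha)z\right)/(1-z)$ maps $\D$ onto the half-plane $\real w>\alpha$, so $k_\alpha\in\mathcal{S}^*(\alpha)$; yet $k_\alpha(qz)/k_\alpha(z)=q\left((1-z)/(1-qz)\right)^{2(1-\alpha)}\to 0$ as $z\to 1^{-}$ along the real axis, while Theorem~\ref{sec2-thm1} requires this ratio to stay in the closed disk $|v-\alpha q|\le 1-\alpha$, which omits the origin precisely when $\alpha q>1-\alpha$. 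Hence $k_\alpha\notin\mathcal{S}^*_q(\alpha)$, and the asserted equality fails for every $\alpha>1/2$: the true obstruction is not the argument of $f(qz)/f(z)$ but its modulus becoming too \emph{small}. It is worth noting that the paper's own proof of this direction is also broken, at exactly the step you flagged: it takes the unique $g\in\mathcal{S}^*$ with $\left(zf'/f-\alpha\right)/(1-\alpha)=zg'/g$, notes $g\in\mathcal{S}^*_q$ for all $q$ by the result of \cite{IMS90}, and then invokes Proposition~\ref{sec1-prop1} to produce $h\in\mathcal{S}_q^*(\alpha)$ ``with $h(z)=f(z)$.'' That identification is unjustified: the differential relation integrates to $f(z)=z\left(g(z)/z\right)^{1-\alpha}$, whence $f(qz)/f(z)=q^{\alpha}\left(g(qz)/g(z)\right)^{1-\alpha}$, whereas the $h$ supplied by Proposition~\ref{sec1-prop1} satisfies $h(qz)/h(z)=\alpha q+(1-\alpha)\,g(qz)/g(z)$; these are different functions for $0<\alpha<1$. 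So your instinct about where the difficulty lies was exactly right, but the correct resolution of that difficulty is a counterexample (the statement needs a restriction such as $\alpha q\le 1-\alpha$ even to be plausible), not a sharper estimate.
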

\begin{proof}
The inclusions 
$$\bigcap_{q<p<1}\mathcal{S}^*_p(\alpha)\subset \mathcal{S}^*_q(\alpha)
~~\mbox{ and }~~
\bigcap_{0<q<1}\mathcal{S}^*_q(\alpha) \subset \mathcal{S}^*(\alpha)
$$ 
clearly hold. It remains to show that 
$$\mathcal{S}^*(\alpha) \subset \bigcap_{0<q<1}\mathcal{S}^*_q(\alpha)
$$
holds. For this, we let $f\in \mathcal{S}^*(\alpha)$. Then it is enough
to show that $f\in \mathcal{S}^*_q(\alpha)$ for all $q\in (0,1)$.
Since $f\in \mathcal{S}^*(\alpha)$ there exists a unique $g\in \mathcal{S}^*$
satisfying
$$\frac{\ds\frac{zf'(z)}{f(z)}-\alpha}{1-\alpha}=\frac{zg'(z)}{g(z)},\quad |z|<1.
$$
Since $\mathcal{S}^*=\cap_{0<q<1}\mathcal{S}^*_q$, it follows that $g\in \mathcal{S}^*_q$ for all $q\in(0,1)$. 
Thus, by Proposition~\ref{sec1-prop1} there exists a unique $h\in \mathcal{S}_q^*(\alpha)$ satisfying the 
identity (\ref{prop1-eqn}) with $h(z)=f(z)$. The proof now follows immediately.
\end{proof}

We now define two sets and proceed to prepare some basic results which are being used 
to prove our main results in this section. They are
$$B_q=\{g:g\in \mathcal{H}(\D),~g(0)=q \mbox{ and } g:\D \to \D\}
~~\mbox{ and }~~
B_q^0=\{g:g\in B_q \mbox{ and } 0\notin g(\D) \}.
$$
\begin{lemma}{\label{lm2}}
If $h\in B_q$ then the infinite product 
$\prod_{n=0}^\infty \{((1-\alpha)h(zq^n)+\alpha q)/q\}$ converges uniformly on compact subsets of $\D$.
\end{lemma}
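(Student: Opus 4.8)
The plan is to reduce the statement to the classical criterion for uniform convergence of infinite products: if $\sum_{n=0}^\infty |u_n(z)|$ converges uniformly on compact subsets of $\D$, then $\prod_{n=0}^\infty(1+u_n(z))$ converges uniformly on compact subsets to a holomorphic function. So the first step is to write the $n$-th factor in the form $1+u_n(z)$. Using $\alpha q - q = -(1-\alpha)q$, a direct computation gives
$$u_n(z):=\frac{(1-\alpha)h(zq^n)+\alpha q}{q}-1=\frac{(1-\alpha)\bigl(h(zq^n)-q\bigr)}{q}.$$
Thus it suffices to show that $\sum_{n=0}^\infty |u_n(z)|$ converges uniformly on every compact $K\subset \D$.

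The heart of the argument is a geometric decay estimate for $|h(w)-q|$ as $w\to 0$. Since $h\in B_q$ satisfies $h(0)=q$ and $h:\D\to\D$, I would invoke the Schwarz--Pick lemma, which yields
$$\left|\frac{h(w)-q}{1-q\,h(w)}\right|\le |w|,\quad w\in\D.$$
Because $|h(w)|<1$ and $0<q<1$ force $|1-q\,h(w)|\le 1+q<2$, this rearranges to $|h(w)-q|\le 2|w|$. Substituting $w=zq^n$ produces the crucial bound $|h(zq^n)-q|\le 2|z|\,q^n$, and hence
$$|u_n(z)|\le \frac{2(1-\alpha)}{q}\,|z|\,q^n.$$

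To finish, fix a compact set $K\subset\D$ and put $r:=\max_{z\in K}|z|<1$. Then $|u_n(z)|\le \tfrac{2(1-\alpha)r}{q}\,q^n$ for all $z\in K$, and since $\sum_{n=0}^\infty q^n=(1-q)^{-1}<\infty$, the Weierstrass $M$-test gives the required uniform convergence of $\sum_n|u_n(z)|$ on $K$. The product theorem then delivers uniform convergence of $\prod_{n=0}^\infty\{((1-\alpha)h(zq^n)+\alpha q)/q\}$ on compact subsets of $\D$.

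I expect the only real obstacle to be the decay estimate for $|h(zq^n)-q|$: without the geometric factor $q^n$, which comes precisely from applying Schwarz--Pick to the self-map $h$ fixing the value $q$ at the origin, one would obtain at best pointwise (non-summable) control and the $M$-test would fail. Everything after that bound is routine, and the only mild care needed is that the factors may vanish, which is harmless since the product theorem guarantees convergence in the compact-uniform sense regardless of zeros.
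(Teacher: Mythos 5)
Your proof is correct, but it takes a more self-contained route than the paper. The paper's proof is a two-line reduction: it sets $g(z)=(1-\alpha)h(z)+\alpha q$, observes that $g(0)=(1-\alpha)q+\alpha q=q$ and $|g(z)|\le(1-\alpha)|h(z)|+\alpha q<1$ so that $g\in B_q$, and then invokes Lemma~2.1 of Ismail--Merkes--Styer \cite{IMS90}, which states precisely that $\prod_{n=0}^\infty g(zq^n)/q$ converges uniformly on compact subsets of $\D$ whenever $g\in B_q$. What you have done, in effect, is unpack that cited lemma: your Schwarz--Pick estimate $|h(w)-q|\le 2|w|$ (equivalently, applied to $g$, the bound $|g(w)-q|\le 2(1-\alpha)|w|$) is exactly the geometric-decay input that drives the proof of the Ismail--Merkes--Styer lemma, and the passage from there through the Weierstrass $M$-test to uniform convergence of $\prod(1+u_n)$ is the standard product criterion. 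So the two arguments rest on the same mathematical mechanism; the paper's version buys brevity by outsourcing the analytic work to the cited reference, while yours buys a complete, reference-free proof and makes explicit where the hypothesis $h(0)=q$ (via Schwarz--Pick at the fixed value $q$) and the factor $q^n$ enter. Your closing remarks are also sound: without the $q^n$ decay the series $\sum_n|u_n|$ would not be summable, and vanishing factors are harmless under the compact-uniform product theorem.
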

\begin{proof}
We set $(1-\alpha)h(z)+\alpha q=g(z)$. Since $h \in B_q$, it easily follows that $g \in B_q$.
By \cite[Lemma~2.1]{IMS90}, the conclusion of our lemma follows.
\end{proof}

\begin{lemma}{\label{lm3}}
If $h\in B_q^0$ then the infinite product $\prod_{n=0}^\infty \{((1-\alpha)h(zq^n)+\alpha q)/q\}$ converges 
uniformly on compact subsets of $\D$ to a nonzero function in $\mathcal{H}(\D)$ with no zeros. Furthermore, the function 
\begin{equation}\label{eq3}
f(z)=\frac{z}{\prod_{n=0}^\infty \{((1-\alpha)h(zq^n)+\alpha q)/q\}}
\end{equation}
belongs to $\mathcal{S}^*_q(\alpha)$ and $h(z)=((f(qz)/f(z))-\alpha q)/(1-\alpha)$.
\end{lemma}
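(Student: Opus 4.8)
The plan is to reduce the statement to the already-known case $\alpha=0$ of Ismail, Merkes and Styer by means of the same affine substitution used in the proof of Lemma~\ref{lm2}. Set $g(z)=(1-\alpha)h(z)+\alpha q$, so that the factors of the infinite product are precisely $((1-\alpha)h(zq^n)+\alpha q)/q=g(zq^n)/q$ and the product in question is $\prod_{n=0}^\infty g(zq^n)/q$. As recorded in the proof of Lemma~\ref{lm2}, one checks at once that $g(0)=(1-\alpha)q+\alpha q=q$ and $|g(z)|\le(1-\alpha)|h(z)|+\alpha q<1$ on $\D$, so $g\in B_q$; uniform convergence of the product on compact subsets of $\D$ is then Lemma~\ref{lm2}.

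The key step is to promote $g$ from $B_q$ to $B_q^0$, i.e.\ to show $0\notin g(\D)$. Since $g(z)=0$ is equivalent to $h(z)=-\alpha q/(1-\alpha)$, this is exactly the assertion that $h$ omits the value $-\alpha q/(1-\alpha)$; for $\alpha=0$ it is the hypothesis $h\in B_q^0$ itself, and in general it is where the hypothesis $0\notin h(\D)$ must be consumed. Granting $g\in B_q^0$, I would apply the $\alpha=0$ result of \cite{IMS90} (their lemma on members of $B_q^0$) directly to $g$: it yields that $\prod_{n=0}^\infty g(zq^n)/q$ converges uniformly on compacta to a nonzero function in $\mathcal{H}(\D)$ with no zeros, and that the function $f(z)=z/\prod_{n=0}^\infty g(zq^n)/q$ of (\ref{eq3}) lies in $\mathcal{S}^*_q$ and satisfies $g(z)=f(qz)/f(z)$.

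It then remains only to translate back to the order-$\alpha$ setting. Solving $g=(1-\alpha)h+\alpha q$ for $h$ and inserting $g(z)=f(qz)/f(z)$ gives $h(z)=((f(qz)/f(z))-\alpha q)/(1-\alpha)$, which is the displayed identity. Finally, because $h$ maps $\D$ into $\D$ we have $|h(z)|\le 1$, that is $|f(qz)/f(z)-\alpha q|\le 1-\alpha$ on $\D$; by Theorem~\ref{sec2-thm1} this is precisely the condition $f\in\mathcal{S}^*_q(\alpha)$, completing the argument.

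The main obstacle is the zero-freeness in the second paragraph: verifying $0\notin g(\D)$, equivalently that $f$ has no zero in $\D$ other than the simple zero at the origin. Everything else is the affine bookkeeping of the first and third paragraphs together with a citation of \cite{IMS90}; it is only at the step $g\in B_q^0$ that the hypothesis on the omitted value of $h$ is genuinely used, and care is needed there because the affine shift moves the omitted point from $0$ to $-\alpha q/(1-\alpha)$.
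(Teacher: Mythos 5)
Your reduction to the $\alpha=0$ lemma of \cite{IMS90} via the substitution $g=(1-\alpha)h+\alpha q$ is exactly the paper's route (its Lemma~\ref{lm2} is proved by the same substitution), and your first and third paragraphs are correct. But the proposal is incomplete at precisely the point you flag: you never establish $0\notin g(\D)$, you only ``grant'' it. In fact this step cannot be established from the stated hypothesis. What is needed is that $h$ omits the value $-\alpha q/(1-\alpha)$, whereas $h\in B_q^0$ only says that $h$ omits $0$; for $0<\alpha<1/(1+q)$ the point $\beta:=-\alpha q/(1-\alpha)$ lies in $\D\setminus\{0\}$, so the two omitted-value conditions are independent. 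Concretely, fix such $\alpha$ and $q$, let $\Omega\subset\D$ be a simply connected domain containing $q$ and $\beta$ but not $0$ (for example the slit disk $\D\setminus[0,i)$), and let $h$ map $\D$ conformally onto $\Omega$ with $h(0)=q$. Then $h\in B_q^0$, yet $g=(1-\alpha)h+\alpha q$ vanishes at the point $z_1$ where $h(z_1)=\beta$, so the infinite product vanishes at $z_1$ and the function $f$ of (\ref{eq3}) has a pole there; hence $f\notin\mathcal{A}$ and the conclusion of the lemma fails. So the gap in your argument is unfillable: Lemma~\ref{lm3} as stated is false for these parameter ranges.

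The comparison with the paper is instructive. The paper's own proof disposes of the issue with the assertion ``Since $h\in B_q^0$, we have $h(z)\neq 0$ in $\D$ and the infinite product does not vanish in $\D$,'' which is exactly the non sequitur you declined to commit: the factors vanish where $h=-\alpha q/(1-\alpha)$, not where $h=0$ (these coincide only when $\alpha=0$). Thus your proposal correctly isolates a genuine defect of the lemma, which propagates to Theorem~\ref{thm1} and to the step ``Clearly, $g\in B_{q,\alpha}^0$'' in the proof of Theorem~\ref{thm2}. The natural repair is to replace the hypothesis $h\in B_q^0$ by ``$h\in B_q$ and $(1-\alpha)h+\alpha q$ omits $0$ on $\D$'' (equivalently $g\in B_q^0$; this reduces to the original hypothesis when $\alpha=0$ and is automatic when $\alpha(1+q)\ge 1$, since then $|\beta|\ge 1$). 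Under that hypothesis your three paragraphs constitute a complete and correct proof, essentially identical in substance to the paper's intended argument.
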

\begin{proof}
The convergence of the infinite product is proved in Lemma \ref{lm2}. 
Since $h\in B_q^0$, we have $h(z)\neq 0$ in 
$\D$ and the infinite product does not vanish in $\D$. Thus, the function $f\in \mathcal{A}$ and
we find the relation
$$\frac{f(qz)}{f(z)}=(1-\alpha)h(z)+\alpha q, ~~\mbox{ equivalently }~~ 
\frac{\ds\frac{f(qz)}{f(z)}-\alpha q}{1-\alpha}=h(z) .
$$
Since $h\in B_q^0$, we get $f\in \mathcal{S}^*_q(\alpha)$ and the proof of our lemma is complete.
\end{proof}
We define two classes $B_{q,\alpha}$ and $B_{q,\alpha}^0$ by
$$B_{q,\alpha}=\left\{g: g\in \mathcal{H}(\D),~g(0)=\frac{q}{1-\alpha(1-q)}\mbox{ and } g:\D \to \D \right\}
$$
and
$$
B_{q,\alpha}^0=\{g:g\in B_{q,\alpha} \mbox{ and } 0\notin g(\D) \}.
$$

\begin{lemma}\label{lm}
A function $g\in B_{q,\alpha}^0$ if and only if it has the representation
\begin{equation}\label{eq4}
g(z)=\exp\left\{\left(\ln\frac{q}{1-\alpha(1-q)}\right)p(z)\right\}, 
\end{equation}
where $p(z)$ belongs to the class
$$
\mathcal{P}=\{p: p\in \mathcal{H}(\D), p(0)=1 \mbox{ and } \real \{p(z)\}\ge 0 \mbox{ for } z\in\D\}.
$$
\begin{proof}
For $g\in B_{q,\alpha}^0$, define the function $L(z)=\Log g(z)$. Then it is easy to show
that the function $p(z)=\ds\frac{L(z)}{\ln \frac{q}{1-\alpha(1-q)}}\in\mathcal{P}$ and satisfies (\ref{eq4}). 
Conversely, if $g$ is given by (\ref{eq4}), then it is obvious that $g\in B_{q,\alpha}^0$.
\end{proof}
\end{lemma}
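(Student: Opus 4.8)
The plan is to prove both implications directly, using a single-valued holomorphic logarithm of $g$. Write $\beta:=q/(1-\alpha(1-q))$; I first record that $\beta\in(0,1)$, since $1-\alpha(1-q)>0$ for $0\le\alpha<1$ and $0<q<1$, while $\beta<1$ reduces to $q(1-\alpha)<1-\alpha$, i.e.\ $q<1$. Consequently the constant $c:=\ln\frac{q}{1-\alpha(1-q)}=\ln\beta$ is real and strictly negative, and this sign is exactly what makes the inequalities line up in the argument.

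For the forward implication I would take $g\in B_{q,\alpha}^0$. Since $g$ is holomorphic and nonvanishing on the simply connected disk $\D$, with $g(0)=\beta>0$, there is a holomorphic branch $L$ of $\log g$ normalized by $L(0)=\ln\beta=c$, so that $g=\exp L$. Because $g$ maps $\D$ into $\D$ we have $|g(z)|<1$, hence $\real L(z)=\ln|g(z)|<0$ on $\D$. Setting $p(z):=L(z)/c$, holomorphy is clear, $p(0)=c/c=1$, and since both $c$ and $\real L$ are negative we get $\real p=\real L/c>0$; thus $p\in\mathcal{P}$ and $g=\exp(L)=\exp(c\,p)$, which is the desired representation (\ref{eq4}).

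For the converse, assume $g=\exp(c\,p)$ with $p\in\mathcal{P}$. Then $g$ is holomorphic and nowhere zero, and $g(0)=\exp(c\,p(0))=e^{c}=\beta=q/(1-\alpha(1-q))$. It remains to check $g:\D\to\D$, i.e.\ $|g(z)|<1$. As $c$ is real, $|g(z)|=\exp(c\,\real p(z))$, so I would verify $\real p(z)>0$ on all of $\D$: the nonnegative harmonic function $\real p$ cannot vanish at an interior point without being identically $0$ (minimum principle), which is excluded by $\real p(0)=1$. Hence $c\,\real p(z)<0$ and $|g(z)|<1$, giving $g\in B_{q,\alpha}^0$.

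The computations are routine; the two points that need care are the existence of a single-valued holomorphic logarithm of $g$ (guaranteed because $g$ is nonvanishing on the simply connected $\D$) and the strict bound $|g|<1$. I expect the latter to be the main, if mild, obstacle: the class $B_{q,\alpha}$ requires $g(\D)\subset\D$ rather than $\overline{\D}$, so one must upgrade $\real p\ge 0$ to $\real p>0$ via the minimum principle, and keep careful track of the negative sign of $c$ throughout.
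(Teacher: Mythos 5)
Your proof is correct and follows essentially the same route as the paper's: take a normalized holomorphic logarithm $L$ of $g$, set $p(z)=L(z)\big/\ln\frac{q}{1-\alpha(1-q)}$, and reverse the process for the converse. The only difference is that you carefully spell out what the paper dismisses as ``easy'' and ``obvious'' --- the existence of a single-valued branch of $\log g$ on the simply connected disk (more careful than the paper's pointwise $\Log g(z)$, which need not be holomorphic if $g$ meets the negative real axis), the sign bookkeeping with the negative constant $c=\ln\beta$, and the minimum-principle upgrade from $\real\, p\ge 0$ to $\real\, p>0$ needed for the strict bound $|g(z)|<1$.
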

\begin{theorem}\label{thm1}
The mapping $\rho:\mathcal{S}^*_q(\alpha) \to B_q^0$ defined by
$$\rho(f)(z)=\frac{\ds\frac{f(qz)}{f(z)}-\alpha q}{1-\alpha}
$$
is a bijection.
\end{theorem}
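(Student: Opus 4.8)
The plan is to prove that $\rho$ is a bijection by producing an explicit two‑sided inverse, leaning on the machinery already assembled in Lemmas~\ref{lm2} and \ref{lm3}. Throughout write $g_f(z):=f(qz)/f(z)$, so that $\rho(f)=(g_f-\alpha q)/(1-\alpha)$ and, conversely, $g_f=(1-\alpha)\rho(f)+\alpha q$. First I would verify that $\rho$ is well defined, i.e.\ that $\rho(f)\in B_q^0$ whenever $f\in\mathcal{S}^*_q(\alpha)$. Since $f\in\mathcal{A}$ and, by the observation preceding Theorem~\ref{sec2-thm1}, $f$ vanishes only at the origin, the quotient $g_f$ extends analytically across $z=0$ with $g_f(0)=q$; hence $\rho(f)$ is analytic on $\D$ with $\rho(f)(0)=(q-\alpha q)/(1-\alpha)=q$. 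Theorem~\ref{sec2-thm1} gives $|g_f(z)-\alpha q|\le 1-\alpha$, that is $|\rho(f)(z)|\le 1$ on $\D$; since $|\rho(f)(0)|=q<1$, the maximum modulus principle upgrades this to the strict bound $|\rho(f)(z)|<1$, so $\rho(f):\D\to\D$. This already places $\rho(f)$ in $B_q$.

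The remaining, and most delicate, requirement is that $0\notin\rho(f)(\D)$. I would try to derive this from the zero‑freeness of $f$ off the origin: because $f(z)\neq 0$ for $0<|z|<1$, the factor $g_f$ is itself zero‑free, and I would track how this is meant to force $\rho(f)$ to avoid $0$, tying the argument to the product representation below. This is the step I expect to be the main obstacle, precisely because the vanishing $\rho(f)(z_0)=0$ corresponds to $g_f(z_0)=\alpha q$ rather than to $g_f(z_0)=0$; making that implication airtight for $\alpha\neq 0$ is the crux of well definedness and is where I would concentrate the effort.

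For the inverse, and hence for bijectivity, I would invoke Lemma~\ref{lm3}: it assigns to each $h\in B_q^0$ the function $\beta(h)(z)=z/\prod_{n=0}^{\infty}\{((1-\alpha)h(zq^n)+\alpha q)/q\}\in\mathcal{S}^*_q(\alpha)$ and records the identity $\rho(\beta(h))=h$. That gives surjectivity together with one composition identity $\rho\circ\beta=\mathrm{id}_{B_q^0}$ for free. For the reverse composition, given $f\in\mathcal{S}^*_q(\alpha)$ I would iterate the functional equation $f(qz)=g_f(z)f(z)$ to obtain $f(q^n z)/f(z)=\prod_{k=0}^{n-1}g_f(q^k z)$; letting $n\to\infty$ and using $f(w)\sim w$ near the origin yields the telescoping product $z/f(z)=\prod_{n=0}^{\infty}g_f(q^n z)/q$, convergent by Lemma~\ref{lm2}. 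Substituting $g_f=(1-\alpha)\rho(f)+\alpha q$ shows $\beta(\rho(f))=z/(z/f(z))=f$, so $\beta\circ\rho=\mathrm{id}$.

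Consequently $\rho$ and $\beta$ are mutually inverse and $\rho$ is a bijection. As a sanity check on injectivity independent of the product, I would note that if $\rho(f_1)=\rho(f_2)$ then $g_{f_1}=g_{f_2}$, so $\phi:=f_1/f_2$ is analytic and zero‑free on $\D$ with $\phi(0)=1$ and $\phi(qz)=\phi(z)$; iterating gives $\phi(z)=\phi(q^n z)\to\phi(0)=1$, whence $f_1=f_2$. The only genuinely nontrivial point in the whole argument remains the zero‑avoidance in the first paragraph; everything else is bookkeeping around Lemmas~\ref{lm2}–\ref{lm3} and the maximum principle.
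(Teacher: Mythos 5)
Your route is exactly the paper's: the paper also proves bijectivity by exhibiting the two-sided inverse $\sigma(h)(z)=z/\prod_{n=0}^{\infty}\{((1-\alpha)h(zq^n)+\alpha q)/q\}$, citing Lemma~\ref{lm3} for $\sigma(h)\in\mathcal{S}^*_q(\alpha)$ and $\rho\circ\sigma=\mathrm{id}$, and verifying $\sigma\circ\rho=\mathrm{id}$ by the same telescoping computation you give (your separate injectivity argument is correct but redundant once both compositions are identities). The difference is that you attempted to verify well-definedness, i.e.\ $\rho(f)\in B_q^0$, which the paper's proof never addresses, and the step you flagged as the obstacle --- that $0\notin\rho(f)(\D)$ --- is indeed a genuine gap in your write-up. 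You were right not to wave it through: what zero-freeness of $g_f(z)=f(qz)/f(z)$ actually yields is that $\rho(f)=(g_f-\alpha q)/(1-\alpha)$ omits the value $-\alpha q/(1-\alpha)$, and omitting that value coincides with omitting $0$ only when $\alpha=0$.

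In fact the gap cannot be closed, because for every $\alpha\in(0,1)$ the statement is false as written. Choose a disk $\Omega=\{w:\,|w-q/2|<R\}$ with $q/2<R<\min\{q/2+\alpha q/(1-\alpha),\,1-q/2\}$; then $0,q\in\Omega\subset\D$ while $-\alpha q/(1-\alpha)\notin\Omega$. Let $h$ be a conformal map of $\D$ onto $\Omega$ with $h(0)=q$. Every factor $(1-\alpha)h(zq^n)+\alpha q$ is then zero-free, so the product converges (Lemma~\ref{lm2}) to a nonvanishing function, $f:=\sigma(h)\in\mathcal{A}$, and $|f(qz)/f(z)-\alpha q|=(1-\alpha)|h(z)|<1-\alpha$, so $f\in\mathcal{S}^*_q(\alpha)$ by Theorem~\ref{sec2-thm1}; yet $\rho(f)=h$ vanishes at a point of $\D$, hence $\rho(f)\notin B_q^0$ and $\rho$ does not even map into $B_q^0$. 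Conversely, when $\alpha(1+q)<1$ there exist $h\in B_q^0$ attaining $-\alpha q/(1-\alpha)$ (map $\D$ onto a slit disk omitting $0$ but containing this point); for such $h$ the product vanishes, $\sigma(h)$ has poles, and $h$ lies outside the image of $\rho$, so surjectivity onto $B_q^0$ fails as well. The same defect sits inside the paper: the proof of Lemma~\ref{lm3} passes from ``$h\neq 0$'' to ``the infinite product does not vanish,'' a non sequitur for $\alpha>0$, since the factors vanish where $h=-\alpha q/(1-\alpha)$, not where $h=0$. The correct codomain for Theorem~\ref{thm1} is $\{h\in B_q:\,-\alpha q/(1-\alpha)\notin h(\D)\}$, i.e.\ those $h\in B_q$ for which $(1-\alpha)h+\alpha q$ is zero-free; with that replacement both Lemma~\ref{lm3} and your argument (equivalently, the paper's) go through verbatim, and for $\alpha=0$ everything reduces correctly to Theorem~1.15 of \cite{IMS90}. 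So your diagnosis identified not a missing trick but an error in the statement itself.
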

\begin{proof}
For $ h \in B_q^0 $, define a mapping $\sigma:\,B_q^0 \to \mathcal{A}$ by 
$$ \sigma(h)(z)=\frac{z}{\prod_{n=0}^\infty \{((1-\alpha)h(zq^n)+\alpha q)/q\}}.
$$
It is clear from Lemma~\ref{lm3} that $\sigma(h) \in \mathcal{S}^*_q(\alpha)$ and $(\rho\circ \sigma)(h)=h$. Considering the composition mapping $\sigma\circ \rho$ we compute that
$$(\sigma\circ \rho)(f)(z)=\frac{z}{\prod_{n=0}^\infty \{(f(zq^{n+1})/qf(zq^n)\}}=\frac{z}{z/f(z)}=f(z).
$$
Hence $\sigma\circ \rho$ and $\rho\circ \sigma$ are identity mappings and $\sigma$ 
is the inverse of $\rho$, i.e. the map $\rho(f)$ is invertible. Hence $\rho(f)$ is a bijection. This completes the proof of our theorem.
\end{proof}
\section{Proof of the main theorems}

This section is devoted to the proofs of main theorems using the supplementary results proved in Section~2.

\begin{figure}[H]
\begin{minipage}[b]{0.5\textwidth}
\includegraphics[width=5.5cm]{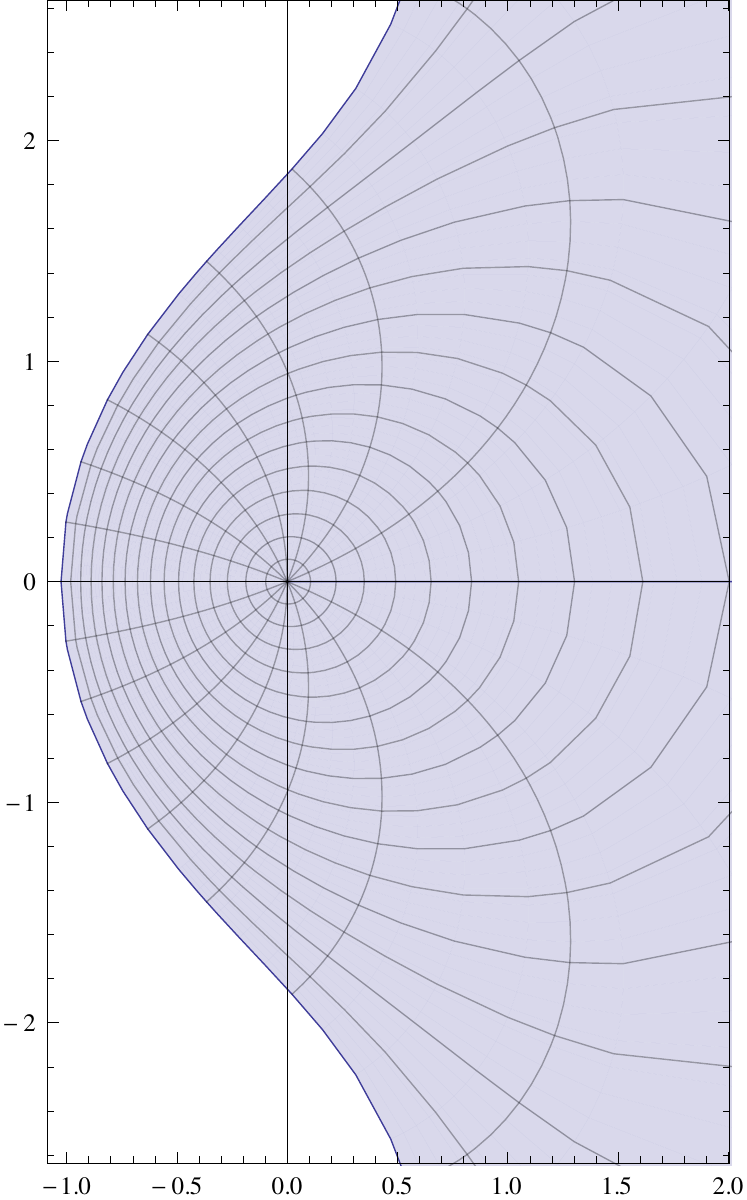}
\vskip 0.1cm \hskip 0.4cm
Graph of $F_{1/2,1/2}(z)$, $|z|<1$ 
\end{minipage}
\begin{minipage}[b]{0.35\textwidth}
\includegraphics[width=5.5cm,height=8.9cm]{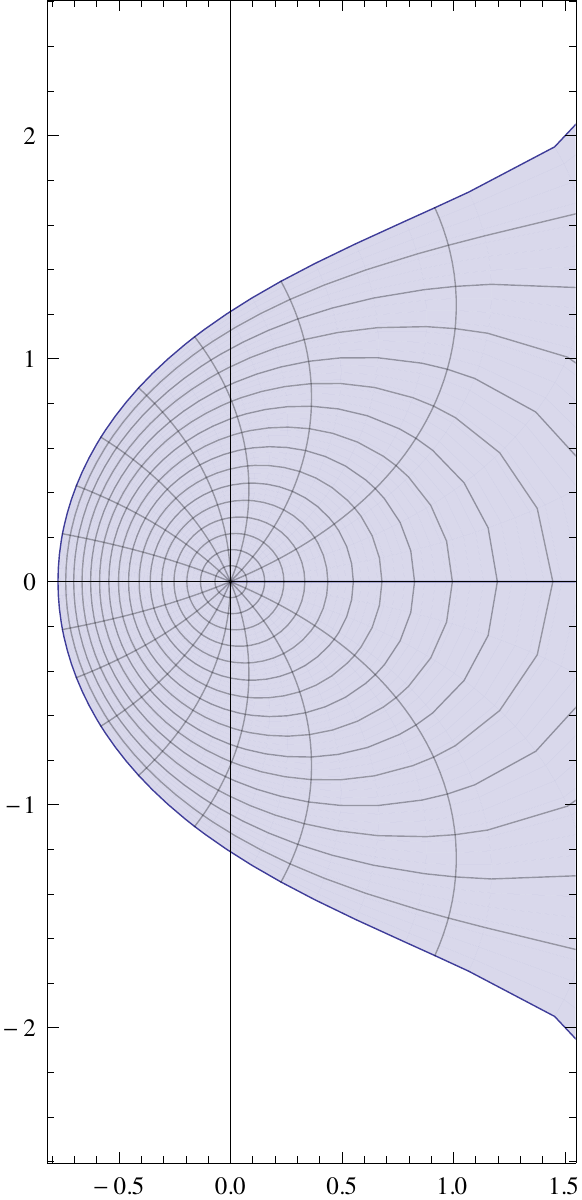}
\vskip 0.1cm \hskip 0.4cm
Graph of $F_{5/6,1/2}(z)$, $|z|<1$ 
\end{minipage}
\vskip 0.3cm
\caption{Graphs of the complex functions $F_{q,\alpha}(z)$ for $|z|<1$.}\label{Fz}
\end{figure}

\begin{proof}[\bf Proof of Theorem~\ref{thm2}]
For $0<q<1$ and $0\le \alpha<1$, let $F_{q, \alpha}$ be defined by (\ref{MainThm1:eq}). 
Geometry of $F_{q, \alpha}$ is described in Figure~\ref{Fz} for different ranges
over the parameters $q$ and $\alpha$.
Suppose that $f\in \mathcal{S}^*_q(\alpha)$.
Then by Theorem~\ref{thm1} and Lemma~\ref{lm3}, it is clear that $f(z)$ has the 
representation (\ref{eq3}) with $h\in B_q ^0$. The logarithmic derivative of $f$ gives
\begin{equation}\label{eq5}
\frac{zf'(z)}{f(z)}=1-\sum_{n=0}^\infty \frac{(1-\alpha)zq^nh'(zq^n)}{(1-\alpha)h(zq^n)+\alpha q}.
\end{equation}
Now, let us assume that
$$
g(z)=\frac{(1-\alpha)h(z)+\alpha q}{1-\alpha(1-q)}.
$$
Clearly, $g\in B_{q,\alpha}^0$ and hence Lemma~\ref{lm} guarantees that $g(z)$ has the 
representation (\ref{eq4}). Taking the logarithmic derivative of $g$ we have
\begin{equation}\label{eq6}
\frac{zg'(z)}{g(z)}=\left(\ln\frac{q}{1-\alpha(1-q)}\right)zp'(z),
\end{equation}
where $\real \{p(z)\}\ge 0$. By Herglotz representation of $p(z)$, there exists a probability measure 
$\mu$ supported on the unit circle $|\sigma|=1$ such that
\begin{equation}\label{eq7}
zp'(z)=\int_{|\sigma|=1}2\sigma z(1-\sigma z)^{-2}d\mu(\sigma).
\end{equation}
Using (\ref{eq6}) and (\ref{eq7}) in (\ref{eq5}), we have
\begin{eqnarray*}
\frac{zf'(z)}{f(z)}&=& 1-2\left(\ln\frac{q}{1-\alpha(1-q)}\right)\sum_{n=0}^\infty 
\int_{|\sigma|=1}\sigma z q^n(1-\sigma zq^n)^{-2}d\mu(\sigma)\\
&=& 1-2\left(\ln\frac{q}{1-\alpha(1-q)}\right)\int_{|\sigma|=1}
\left\{\sum_{n=0}^\infty\sum_{m=1}^\infty m{\sigma}^m z^m q^{mn}\right\}d\mu(\sigma)\\
&=& 1-2\left(\ln\frac{q}{1-\alpha(1-q)}\right)\int_{|\sigma|=1}\left\{\sum_{m=1}^\infty 
m{\sigma}^m z^m\frac{1}{1-q^m}\right\}d\mu(\sigma)\\
&=&1+\int_{|\sigma|=1}\sigma z F_{q, \alpha}^{'}(\sigma z)\rm{d}\mu(\sigma).
\end{eqnarray*}
This completes the proof of our theorem.
\end{proof}



\begin{figure}[H]
\begin{minipage}[b]{0.5\textwidth}
\includegraphics[width=6cm,height=8.15cm]{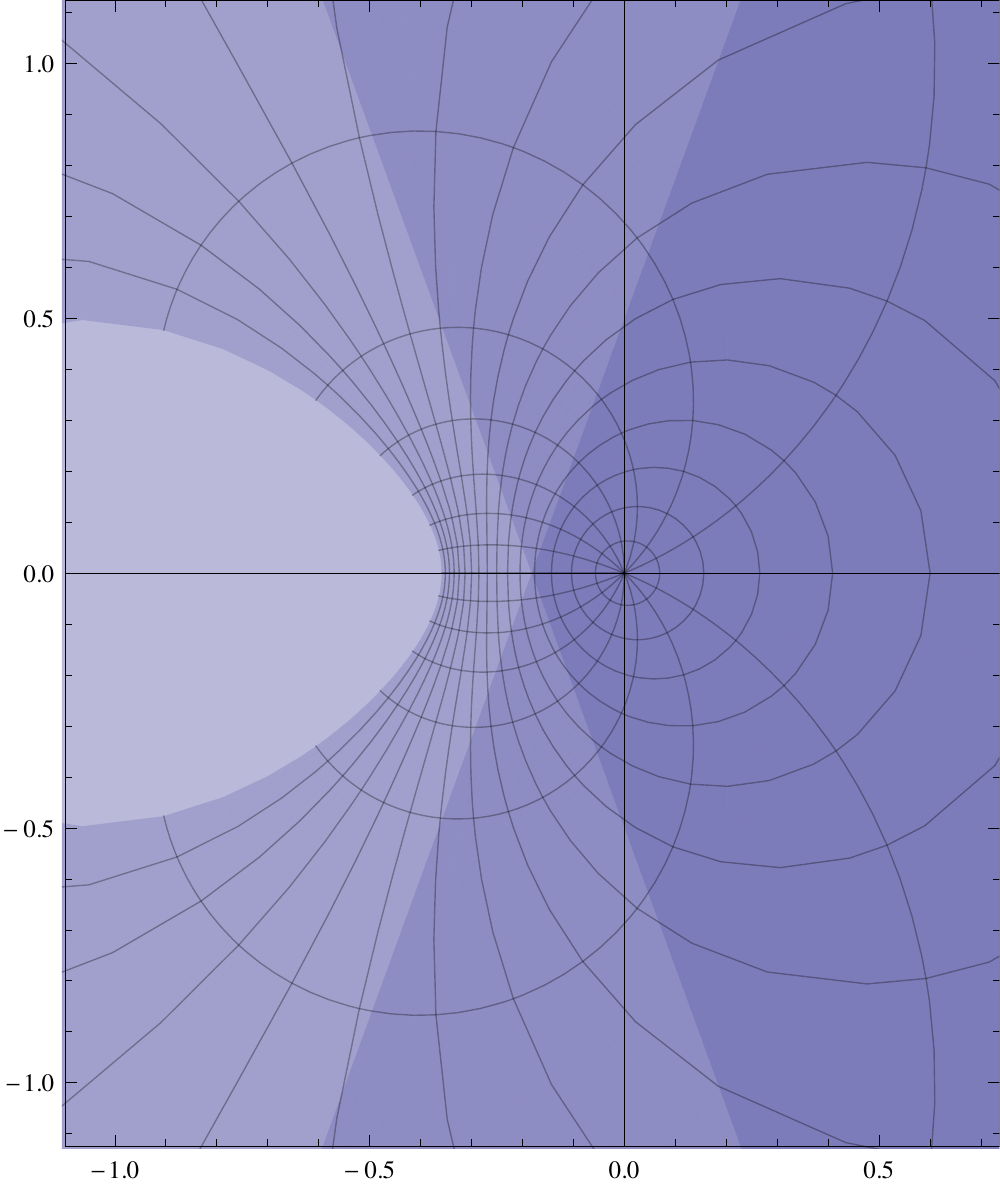}
\vskip 0.1cm \hskip 0.6cm
Graph of $G_{1/2,1/2}(z)$, $|z|<1$ 
\end{minipage}
\begin{minipage}[b]{0.4\textwidth}
\includegraphics[width=6.5cm]{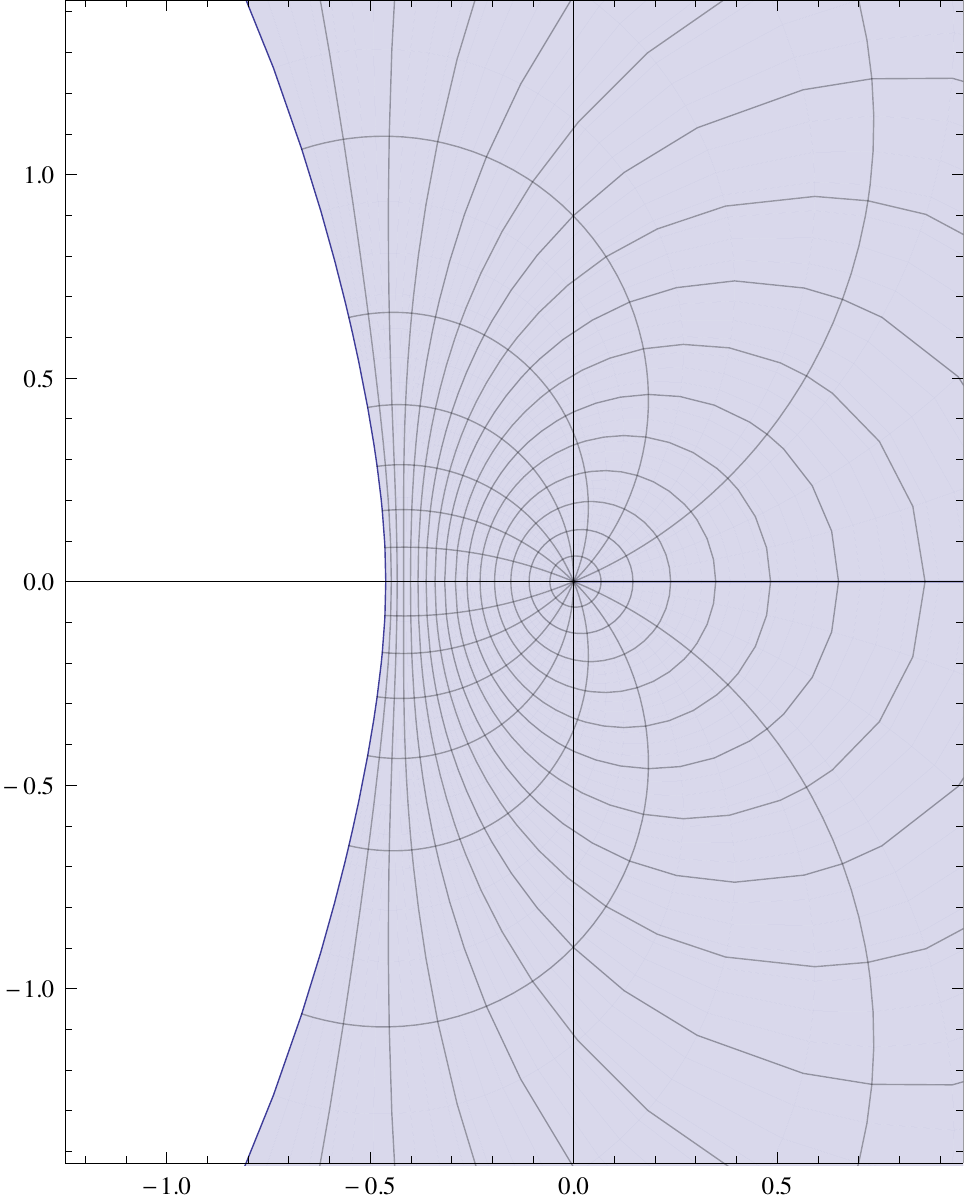}
\vskip 0.1cm \hskip 1cm
Graph of $G_{5/6,1/2}(z)$, $|z|<1$ 
\end{minipage}
\vskip 0.3cm
\caption{Graphs of the complex functions $G_{q,\alpha}(z)$ for $|z|<1$.}\label{Gz}
\end{figure}

\begin{proof}[\bf Proof of Theorem~\ref{sec2-thm7}]
For $0<q<1$ and $0\le \alpha<1$, let $G_{q, \alpha}$ be defined by (\ref{MainThm2:eq}).
Geometry of the mapping $G_{q, \alpha}$ is described in Figure~\ref{Gz} for different ranges
over the parameters $q$ and $\alpha$.
As a special case to Theorem~\ref{thm2}, when the measure has a unit mass, it is clear that 
$G_{q, \alpha}\in \mathcal{S}^*_q(\alpha)$. Let $f\in \mathcal{S}^*_q(\alpha)$.
Then by Theorem~\ref{thm1}, there exist a function $\rho$ such that 
$\rho(f)(z)=h(z)=\ds\left(\frac{f(qz)}{f(z)}-\alpha q\right)/(1-\alpha)\in B_q ^0$. 
Since $h\in B_q ^0$, $g(z)=((1-\alpha)h(z)+\alpha q)/(1-\alpha(1-q))\in B_{q,\alpha} ^0 $. 
By Lemma~\ref{lm}, $g(z)$ has the representation (\ref{eq4}) and on solving we get,
\begin{equation}\label{eq8}
\frac{f(qz)}{f(z)}=(1-\alpha(1-q))\exp\left\{\left(\ln\frac{q}{1-\alpha(1-q)}\right)p(z)\right\}.
\end{equation}
Define the function $\phi(z)=\Log\{f(z)/z\}$ and set
\begin{equation}\label{eq9}
\phi(z)=\Log\frac{f(z)}{z}=\sum_{n=1}^\infty \phi_n z^n.
\end{equation}
On solving, we get 
$$
\ln\frac{q}{1-\alpha(1-q)}+\phi(qz)=\phi(z)+\left(\ln\frac{q}{1-\alpha(1-q)}\right)p(z).
$$
This implies
$$
\phi_n=p_n\left(\ln\frac{q}{1-\alpha(1-q)}\right)/(q^n-1).
$$
Since $|p_n|\le 2$, we have
$$
|\phi_n|\le \frac{(-2)\left(\ln \frac{q}{1-\alpha(1-q)}\right)}{1-q^n}.
$$
From this inequality, together with the expression of $G_{q, \alpha}(z)$ and (\ref{eq9}), the conclusion follows.
\end{proof}

\section{Order of $q$-starlikeness of $z\Phi[a,b;c;q,z]$}
The basic hypergeometric function is associated with the
{\em Watson symbol} $(a;q)_n$ (also called {\em the $q$-shifted factorial}), $n\ge 0$. The Watson symbol is defined by
$$(a;q)_n=(1-a)(1-aq)(1-aq^2)\cdots (1-aq^{n-1})=\prod_{k=0}^\infty \frac{1-aq^k}{1-aq^{k+n}}, \quad (a;q)_0=1
$$
for all real or complex values of $a$. 
In the unit disk $\mathbb{D}$, the {\em basic hypergeometric series} (also called {\em Heine hypergeometric series}) 
is defined by
$$\sum_{n=0}^\infty \frac{(a;q)_n(b;q)_n}{(c;q)_n(q;q)_n}z^n
 = 1+\frac{(1-a)(1-b)}{(1-c)(1-q)}z+\frac{(1-a)(1-aq)(1-b)(1-bq)}{(1-c)(1-cq)(1-q)(1-q^2)}z^2+\cdots,
$$
where $|q|<1$ and $a,b,c$ are real or complex parameters with $(c;q)_n\neq 0$, is convergent. 
The corresponding 
functions are denoted by $\Phi[a,b;c;q,z]$ and are referred to as the 
{\em basic (or Heine) hypergeometric functions} \cite{AAR99,Sla66}. 
The function $z\Phi[a,b;c;q,z]$ is called the {\em shifted basic hypergeometric function.}
The limit
$$\lim_{q\to 1^-}\frac{(q^a;q)_n}{(q;q)_n}=\frac{a(a+1)\cdots (a+n-1)}{n!}
$$
says that, with the substitution $a\mapsto q^a$, the Heine hypergeometric function takes to the well-known 
Gauss hypergeometric function $F(a,b;c;z)$ when $q$ approaches $1^-$.
For basic properties of Heine's hypergeometric series, readers may refer to \cite{GR90}.
K\"{u}stner \cite{Kus02} studied the order of starlikeness for functions $f\in \mathcal{A}$ 
by introducing the quantity
$$\sigma(f):=\inf_{z\in \mathbb{D}}{\rm Re}\,\left(\frac{zf'(z)}{f(z)}\right)\in [-\infty,1].
$$
Certainly, for the identity function $\sigma(f)=1$. K\"{u}stner found in \cite{Kus02} that
the well-known Gauss shifted hypergeometric functions $zF(a,b;c;z)$ have the order of 
starlikeness $-\infty$ for certain constraints on the real parameter $a,b,c$.
For $f\in\mathcal{A}$, let us now define the quantity
$$\sigma_q(f)=\inf_{z\in \D} \real \left(\frac{z(D_qf)(z)}{f(z)}\right) \in [-\infty,1].
$$
We call this quantity as the order of $q$-starlikeness of the function $f$.
Clearly, $\sigma_q(f)=1$ for $f(z)=z$. Note that $\lim_{q\to 1}\sigma_q(f)=\sigma(f)$.
We consider the shifted basic 
hypergeometric functions introduced by Heine \cite{Hei46} and study its $q$-analog of 
order of starlikeness.
Basic background knowledge on the order of starlikeness of the well-known Gauss hypergeometric
functions can be found in \cite{HPV10,Kus02,MM90,Pon97,PV01,Sil93}. 
 
In Theorem~\ref{sec3-thm1} we find the order of $q$-starlikeness of shifted basic hypergeometric functions $z\Phi[a,b;c;q,z]$.

\begin{theorem}\label{sec3-thm1}
Let $a,b,c$ be non-negative real numbers with $0<1-aq< 1-cq$ and $0<1-b< 1-c$. 
For $0<q<1$  and $r \in (0,1]$, 
the function $z\mapsto z\Phi[a,b;c;q,rz]$ has the order of $q$-starlikeness
$$\sigma_q(z\Phi[a,b;c;q,rz])=1+\rho q\frac{(1-a)(1-b)}{(1-c)(1-q)}\,\frac{\Phi[aq,bq;cq;q,\rho]}
{\Phi[a,b;c;q,\rho]}
$$
where 
$$ \rho=-r \mbox{ if } \frac{q(1-a)}{a(1-q)}=:s > 0 \mbox{ and } \rho=r \mbox{ if } s < 0.
$$
In particular, we have
$$1+\frac{s\rho}{1-\rho}\le \sigma_q(z\Phi[a,b;c;q,rz]) \le 1+\frac{\rho s(1-b)}{2(1-c)}.
$$
\end{theorem}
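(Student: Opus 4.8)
The plan is first to put $z(D_qf)(z)/f(z)$ into closed form and then to locate the point of $\D$ where its real part is smallest. Writing $\phi=\Phi[a,b;c;q,\cdot]$, I would begin from the $q$-contiguous relation
$$
(D_q\phi)(w)=\frac{(1-a)(1-b)}{(1-c)(1-q)}\,\Phi[aq,bq;cq;q,w],
$$
which follows by differentiating the defining series and using $(a;q)_{n+1}=(1-a)(aq;q)_n$ and $(q;q)_{n+1}=(1-q^{n+1})(q;q)_n$. Applying the $q$-difference operator to $f(z)=z\,\phi(rz)$ gives $z(D_qf)(z)=\frac{z}{1-q}\bigl(\phi(rz)-q\phi(qrz)\bigr)$, and the identity $\phi(w)-q\phi(qw)=(1-q)\bigl(\phi(w)+qw(D_q\phi)(w)\bigr)$ then yields
$$
\frac{z(D_qf)(z)}{f(z)}=1+q\,\frac{(1-a)(1-b)}{(1-c)(1-q)}\,(rz)\,\frac{\Phi[aq,bq;cq;q,rz]}{\Phi[a,b;c;q,rz]}=:1+K\,\Psi(rz).
$$
Because $a,b,c,q,r$ are real, every Taylor coefficient here is real, so $\real\{1+K\Psi(rz)\}$ is even in $\arg z$; being harmonic wherever $f$ does not vanish, it attains its infimum over $\D$ as a boundary limit, and $\sigma_q=\lim_{t\to1^-}\min_{|z|=t}\real\{1+K\Psi(rz)\}$.

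The heart of the matter is to show that on each circle $|z|=t$ this minimum is taken at a real $z$, namely at $z=-t$ when $K>0$ and at $z=+t$ when $K<0$. Since $1-b>0$ and $1-c>0$, the sign of $K$ is the sign of $1-a$, which is the sign of $s=\tfrac{q(1-a)}{a(1-q)}$; this is precisely the source of the dichotomy $\rho=-r$ for $s>0$ and $\rho=r$ for $s<0$, and it is already visible in the linear term $1+Krz+O(z^2)$, whose real part over $\D$ is least at $z=-1$ if $K>0$ and at $z=+1$ if $K<0$. Promoting this from the leading term to the full function is the genuine obstacle, for the statement is false for an arbitrary power series with coefficients of one sign and must use the hypergeometric structure. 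I would establish it through Heine's $q$-continued fraction for the ratio $\Phi[aq,bq;cq;q,w]/\Phi[a,b;c;q,w]$: under the hypotheses $0<1-aq<1-cq$ and $0<1-b<1-c$ all of its partial quotients keep a fixed sign, exhibiting the ratio as a composition of real M\"obius maps that carry the relevant half-plane into itself, from which the extreme of $\real\{K\Psi\}$ on each circle is forced onto the real axis. A monotonicity argument (the minimum of $\real\{1+K\Psi(rz)\}$ over $|z|\le t$ decreases in $t$) then moves the extremal point to $|z|=1$, i.e. to $w=rz=\rho$; evaluating the displayed formula at $w=\rho$ gives the asserted value $\sigma_q=1+K\rho\,\Phi[aq,bq;cq;q,\rho]/\Phi[a,b;c;q,\rho]$. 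I expect this reduction to the real diameter, rather than any algebra, to be where the stated sign hypotheses on $a,b,c$ are essential, and hence the principal difficulty.

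For the two-sided estimate I would compare the hypergeometric ratio term by term. Writing $A_n$ and $B_n$ for the coefficients of $\Phi[a,b;c;q,\cdot]$ and $\Phi[aq,bq;cq;q,\cdot]$, the shift relation gives $B_n=A_n\,\tfrac{1-c}{(1-a)(1-b)}\,\nu_n$ with $\nu_n=\tfrac{(1-aq^n)(1-bq^n)}{1-cq^n}$, so that
$$
\sigma_q-1=\frac{q\rho}{1-q}\,\frac{\sum_{n\ge0}\nu_nA_n\rho^n}{\sum_{n\ge0}A_n\rho^n}.
$$
The upper bound follows by controlling this ratio through its leading value $\nu_0=\tfrac{(1-a)(1-b)}{1-c}$, and the lower bound from a geometric-series comparison of $\sum_{n}A_n\rho^n$ against $(1-\rho)^{-1}$; after substituting $\tfrac{q}{1-q}=\tfrac{as}{1-a}$, both collapse to $1+\tfrac{s\rho}{1-\rho}\le\sigma_q\le 1+\tfrac{\rho s(1-b)}{2(1-c)}$. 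These inequalities are elementary once the location of the extremum is known, and letting $q\to1^-$ (with $a\mapsto q^a$) recovers K\"ustner's classical value for $zF(a,b;c;z)$, which is the expected consistency check.
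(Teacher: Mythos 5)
Your outline tracks the paper's own proof closely: the same closed form (\ref{eqn0.2}) for $w=z(D_qf)(z)/f(z)$ via the $q$-derivative contiguous relation, the same sign dichotomy governed by $s$, and the same reduction to showing that $\inf_{z\in\D}\real\, w$ is attained, as a boundary limit, at the real point $rz=\rho$. The genuine gap is in the step you yourself call the heart of the matter. From the Heine continued fraction you propose to extract only that the ratio is a ``composition of real M\"obius maps that carry the relevant half-plane into itself,'' and you conclude that this forces the extremum of $\real\, w$ on each circle onto the real axis. That inference is invalid: real Taylor coefficients together with a half-plane image do not locate extrema of the real part. For example, $p(z)=(1+z^2)/(1-z^2)$ has real coefficients, $p(0)=1$, and maps $\D$ into $\{\real\, w>0\}$, yet $\min_{|z|=t}\real\, p(z)$ is attained at $z=\pm it$, not at $z=\pm t$. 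What the argument actually requires is the strictly stronger fact that the relevant ratio is a Markov (Stieltjes) function: after the contiguous relation (\ref{sec3-eq}), one needs $w=1-s+s\int_0^1(1-tz)^{-1}d\mu(t)$ for a probability measure $\mu$ on $[0,1]$, which is the paper's (\ref{eqn0.3}), imported from \cite{AS14}. Positivity of $\mu$ is what does all the work: every kernel $(1-tz)^{-1}$ attains the minimum (resp.\ maximum) of its real part over $|z|\le r$ simultaneously at $z=-r$ (resp.\ $z=r$), so the extremum location, the exact value at $\rho$, and even the nonvanishing of $\Phi[a,b;c;q,\cdot]$ in $\D$ (without which $\real\, w$ is not harmonic throughout $\D$ and your ``infimum is a boundary limit'' step fails) all follow at once. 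Your continued-fraction plan is repairable---an S-fraction with nonnegative partial quotients does represent such an integral, and that is precisely how representations of this kind are proved---but the representation itself is the missing key lemma, and the half-plane self-mapping property you substitute for it is too weak to yield the theorem.

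The closing estimates are also not ``elementary once the location of the extremum is known.'' The lower bound amounts (for $s>0$, $\rho=-r$) to $\Phi[aq,b;c;q,-r]/\Phi[a,b;c;q,-r]\ge 1/(1+r)$, where numerator and denominator are alternating series, so the geometric-series coefficient comparison you invoke does not apply; with the measure representation it is immediate from $\int_0^1(1+tr)^{-1}d\mu(t)\ge(1+r)^{-1}$. Moreover, if you push your own normalization $q/(1-q)=as/(1-a)$ through the upper-bound sketch (controlling the coefficient ratio by $\nu_0$ and using $\int_0^1(1-t\rho)^{-1}d\nu(t)\ge 1/2$), what comes out in the case $s>0$ is $\sigma_q\le 1+a\rho s(1-b)/(2(1-c))$, which carries an extra factor of $a$ compared with the bound you claim everything ``collapses to''; since $a<1$ when $s>0$, this is a strictly weaker inequality, so that step needs an explicit derivation rather than an assertion.
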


\begin{remark}
The case $s<0$ with $r=1$ in Theorem~\ref{sec3-thm1} is considered in the limiting sense.
In this case, the lower bound $1+\displaystyle\frac{sr}{1-r}$ is equal to $-\infty$.
\end{remark}

\begin{proof}[\bf Proof of Theorem~\ref{sec3-thm1}]
Set $\Phi(z)=\Phi[a,b;c;q,z]$ and $f(z)=z \Phi(z)$. 
Now, by (\ref{sec1-eqn1}) we have
\begin{eqnarray*}
(D_qf)(z)&=&\frac{\Phi(z)-q\Phi(qz)}{1-q}\\
&=&\frac{\Phi(z)-q\Phi(z)+q\Phi(z)-q\Phi(qz)}{1-q}\\
&=&\frac{\Phi(z)(1-q)+q(\Phi(z)-\Phi(qz))}{1-q}\\
&=&\Phi(z)+zq(D_q \Phi)(z).
\end{eqnarray*}
Hence,
\begin{equation}\label{eqn0.2}
w=\frac{z(D_qf)(z)}{f(z)}=1+zq\frac{(1-a)(1-b)}{(1-c)(1-q)}\,\frac{\Phi[aq,bq;cq;q,z]}
{\Phi[a,b;c;q,z]},
\end{equation}
where the last equality holds by \cite[1.12(ii), pp. 27]{GR90}. Recall the difference equation stated in \cite{AS14}, which is equivalent to
\begin{equation}\label{sec3-eq}
\frac{\Phi[aq,bq;cq;q,z]}{\Phi[a,b;c;q,z]}=\frac{(1-c)}{a(1-b)z} \left[\frac{\Phi[aq,b;c;q,z]}{\Phi[a,b;c;q,z]}-1\right] .
\end{equation}
Substituting this ratio in (\ref{eqn0.2}), we get
$$w = 1+s\left[\frac{\Phi[aq,b;c;q,z]}{\Phi[a,b;c;q,z]}-1\right]
= 1-s+s\frac{\Phi[aq,b;c;q,z]}{\Phi[a,b;c;q,z]},
$$
where $s$ is defined in the statement of our theorem with $q \in (0,1)$.
It follows from \cite{AS14} that $w$ has an integral representation
\begin{equation}\label{eqn0.3}
w=1-s+s\int_0^1 \frac{1}{1-tz}\mbox{d}\mu(t),
\end{equation}
with the non-negative real numbers $a,b,c$ satisfying the conditions 
$0\le 1-aq \le 1-cq$ and $0<1-b<1-c$.
Now, for $ s>0$, $r\in (0,1]$ and from equation (\ref{eqn0.3}) it follows that the minimum of 
$\real w$ for $|z|\le r$ is attained at the point $z=-r$ and that the minimum is 
$1-\ds \frac{rs}{(1+r)}$ .
Secondly, for $s<0$, $r\in (0,1]$ and from equation (\ref{eqn0.3}), 
it follows that the minimum of $\real w$ for $|z|\le r$ is attained at 
the point $z=r$ and that the minimum is $1+\ds \frac{rs}{(1-r)}$.
This in combination with (\ref{eqn0.2}), yields the order of $q$-starlikeness of
$z\Phi[a,b;c;q,rz]$.

The upper estimate for ${\rm Re}\,w$
follows from (\ref{eqn0.2}) and an integral representation of the ratio 
${\Phi[aq,bq;cq;q,z]}/{\Phi[a,b;c;q,z]}$ obtained in \cite[Theorem~2.13]{AS14}. 
Hence, the conclusion of our theorem follows.
\end{proof}

\begin{remark}
Making the substitutions $a\to q^a$, $b\to q^b$ and $c\to q^c$, and taking the limit
as $q\to 1^-$, we achieve the result of K\"ustner \cite[Theorem~1.1]{Kus02}. 
\end{remark}

\begin{remark}
If $f\in \mathcal{S}_q^*(\alpha)$, $0\le \alpha <1$, the order of $q$-starlikeness for $f$
can be equivalently defined by the quantity 
$$\sigma_q(f):=\inf_{z\in \mathbb{D}} \left \{\frac{1}{1+q}\left(1+q \real w - 
\sqrt{(1-q \real w)^2 -2(1-q)\real w+(1-q^2)|w|^2}\right)\right \},
$$
where $w=\ds\frac{z}{f(z)}(D_qf)(z)$.
Also, one can prove that $\sigma_q(f)\ge \alpha$.
Squaring the inequality given in (\ref{main=defn}) on both the 
sides, we get
$$f\in\mathcal{S}_q^*(\alpha) \iff \alpha^2(1+q)-2\alpha(1+q{\rm Re}\,w)
+2{\rm Re}\,w-(1-q)|w|^2\ge 0.
$$
Solving the inequality for $\alpha$, we obtain the required 
order, $\sigma_q(f)$, of $q$-starlikeness of functions $f\in\mathcal{S}_q^*(\alpha)$.
Since for all $f\in \mathcal{S}_q^*(\alpha)$, $\real w\ge \alpha$ and the lower bound 
$\alpha$ is attained whenever $\real w=|w|$, we get
$$\sigma_q(f)=\inf_{z\in \D} \real w=\alpha.
$$
\end{remark}

\section{Concluding remarks}
At the beginning of the last century, studies on $q$-difference equations appeared in intensive 
works especially by Jackson \cite{Jac10}, Carmichael \cite{Car12}, Mason \cite{Mas15}, Adams \cite{Ada29},
Trjitzinsky \cite{Trj33}, and later by others such as Poincar\'{e}, Picard, Ramanujan. Unfortunately,
from the thirties up to the beginning of the eighties only non-significant interest in this area was
investigated. Recently some research in this topic is carried out by Bangerezako \cite{Ban08}; 
see also references therein for other related work.
Research works in connection with function theory and $q$-theory together were first introduced 
by Ismail and et. al. \cite{IMS90}. Later it is also studied in \cite{Ro92,RS12,SS14,AS14}. 
Since only few work have been carried out in this direction, as indicated in \cite{AS14}, 
there are a lot can be done. For instance, $q$-analog of convexity 
of analytic functions in the unit disk and even more general in arbitrary 
simply connected domains may be interesting for researchers in this field.
Recently, the concept of $q$-convexity for basic hypergeometric functions is 
considered in \cite{BS14}.
Bieberbach conjecture problem for $q$-close-to-convex functions is 
estimated optimally in a recent paper \cite{SS14}. In fact sharpness of this result is 
still an open problem and concerning this, a conjecture is stated there.

\vskip 1cm
\noindent
{\bf Acknowledgement.} The work of the first author is supported by University
Grants Commission, New Delhi (grant no. F.2-39/2011 (SA-I)). The authors would like to thank the referee for his/her 
careful reading of the manuscript and valuable suggestion.

\end{document}